\providecommand{\U}[1]{\protect\rule{.1in}{.1in}}
\providecommand{\U}[1]{\protect\rule{.1in}{.1in}}
\providecommand{\U}[1]{\protect\rule{.1in}{.1in}}
\providecommand{\U}[1]{\protect\rule{.1in}{.1in}}
\providecommand{\U}[1]{\protect\rule{.1in}{.1in}}
\newcommand{\ulambda}{{\boldsymbol{\lambda}}}
\newcommand{\umu}{{\boldsymbol{\mu}}}
\newcommand{\unu}{{\boldsymbol{\nu}}}
\newcommand{\uemptyset }{{\boldsymbol{\emptyset}}}
\newtheorem{Th}{Theorem}[subsection]
\newtheorem{Prop}[Th]{Proposition}
\theoremstyle{remark}
\newtheorem{Rem}[Th]{Remark}{\rmfamily}
\theoremstyle{definition}
\newtheorem{Def}[Th]{Definition}{\rmfamily}
\newtheorem{exa}[Th]{Example}{\rmfamily}
\newcommand\blfootnote[1]{%
  \begingroup
  \renewcommand\thefootnote{}\footnote{#1}%
  \addtocounter{footnote}{-1}%
  \endgroup
}
\begin{document}

\title{Mullineux involution and crystal isomorphisms}
\author{Nicolas Jacon}
\maketitle
\date{}
\blfootnote{\textup{2010} \textit{Mathematics Subject Classification}: \textup{20C08,05E10}} 
\begin{abstract}
We develop a new approach for  the computation of the Mullineux involution for the symmetric group and its Hecke algebra 
 using the notion of crystal isomorphism and the Iwahori-Matsumoto involution for the affine Hecke algebra of type $A$.  As a consequence, we obtain 
  several new elementary combinatorial algorithms for its computation, 
  one of which  is equivalent to Xu's algorithm (and thus Mullineux' original algorithm). We thus obtain a simple  interpretation of these algorithms and a  new elementary proof 
   that they indeed compute the Mullineux involution. 
 
\end{abstract}

\section{Introduction}

The Mullineux problem is a long standing problem in  the representation theory of the symmetric groups which has been studied by various authors since the end of the $70$'s. Let $\mathfrak{S}_n$ be the symmetric group on $n$ letters with $n>1$. It is known that the irreducible representations of $\mathfrak{S}_n$ over the field of complex numbers  are naturally labeled by the partitions of $n$ (the sequences of non increasing positive integers of total sum $n$.)
$$\operatorname{Irr}_{\mathbb{C}} (\mathfrak{S}_n)=\{ \rho_{\lambda} \ |\ \lambda \text{ partition of }n\}.$$
 The characters and the dimensions of these representations may also been easily computed thanks to the combinatorics of partitions. There are exactly two non isomorphic representations of $\mathfrak{S}_n$  with dimension $1$: the trivial representation which is labeled by the partition $(n)$ and the sign representation $\varepsilon$, labeled by the partition  $(\underbrace{1.\ldots.1)}_{n\text{ times}}$.  As a consequence,  if $\lambda$ is a partition of $n$, there exists another partition $\mu$ such that $\rho_{\mu}\simeq \varepsilon \otimes \rho_{\lambda}$. It is natural to ask how one can compute  $\mu$ from $\lambda$. The result is that $\mu$ is the conjugate partition of $\lambda$  which  is defined  by interchanging rows and columns in the Young
diagram of $\lambda$ (the Young diagram of $\lambda$ is the finite collection of boxes  arranged in left-justified rows, with $\lambda_k$  boxes in the $k$th row for all $k\geq 1$.)
 
Of course, all the above questions and problems arise when we replace $\mathbb{C}$ by  an arbitrary field $k$ and in particular by  a field of characteristic $p>0$.  In this case, the irreducible representations have first been constructed in \cite{J1}. They are   labeled by a subset of partitions called the set of $p$-regular partitions   the partitions  of $n$ where the non zero parts are not repeated $p$ or more times.
$$\operatorname{Irr}_{k} (\mathfrak{S}_n)=\{ \widetilde{\rho}_{\lambda} \ |\ \lambda \text{ $p$-regular partition of }n\}.$$
 We also have two one-dimensional representations: the trivial representation and the sign representation $\varepsilon$ and they are non isomorphic if and only if $p\neq 2$. 
By contrast, we still not even know how to compute the dimensions of these representations in general. The other mentioned problem  still makes sense in this context. Namely, if $\lambda$ is a  $p$-regular partition then there exists a unique  $p$-regular partition $\mu$ such that $ \widetilde{\rho}_{\mu}\simeq \varepsilon \otimes  \widetilde{\rho}_{\lambda}$. If we set $m_p (\lambda):=\mu$, we thus obtain   an involution $m_p$ on the set of $p$-regular partitions.

 If $p=2$ then it is clear that $m_p=\operatorname{Id}$ (because then $\varepsilon$ is nothing but the trivial representation)  but in general, it is difficult to describe $m_p$.  In fact, this map  may even be defined in the context of Hecke algebras of type $A$ at a $p$-root of unity. In this case,  $p$ do not need to be a prime but just a positive integer (greater than $2$).    The associated involution that we obtain coincides with $m_p$ if $p$ is prime. A natural  problem is thus to find an explicit description of this  involution  $m_e$ on  
 the set of $e$-regular partitions for all $e\in \mathbb{N}_{>1}$.  This is the main subject of the present paper. 

 In \cite{Mu}, Mullineux has first given a conjectural algorithm for computing this involution (which will be called the Mullineux involution in the sequel).  
 Later, another equivalent algorithm has been given by Xu \cite{Xu1,Xu2}.  In  
 \cite{K},  
 Kleshchev gave another combinatorial recursive algorithm for computing the Mullineux involution  but it was not clear at that time why this algorithm would be equivalent 
  to the Mullineux (and the Xu's) algorithm.  Ford and Kleshchev   gave a proof of this fact later in \cite{FK}. Another proof was given in \cite{BO} by Bessenrodt and Olsson. In \cite{BK}, Brundan and Kujawa gave another proof using works by Serganova  on the general linear supergroup.  
   We also note that recently, Fayers \cite{F}  has given another way for computing the involution. 
  
  The aim of this paper is to present several  elementary  combinatorial (and recursive) algorithms for the computation of the involution using the Kleshchev result. These algorithms are based on the results of  \cite{JL,JL2} and on the following points:
  \begin{enumerate}
  \item Each simple module for the Hecke algebra of type $A$ labeled by an $e$-regular partition  of rank $n$ can be seen as a simple module for the affine Hecke algebra of type $A$.
  \item The Mullineux map at the level of Hecke algebra coincide with the so called Iwahori-Matsumoto involution for the  affine Hecke algebra of type $A$.
  \item The Iwahori-Matsumoto involution may be computed using an analogue involution at the level of Ariki-Koike algebras associated to a multicharge ${\bf s}\in \mathbb{Z}^l$.
  \item This later involution may be computed using the Mullineux involution for Hecke algebras of type $A$ on $e$-regular partitions with rank (strictly) less than $n$.
  \end{enumerate}
As a consequence, to compute the image of an $e$-regular partition  of rank $n$ under the Mullineux involution, we are reduced to compute 
 several images of  $e$-regular partitions  of rank strictly less than $n$ under the Mullineux involution. This thus gives a recursive algorithm to solve our problem. In fact, depending on the {\it multicharge}, we choose for 
  our Ariki-Koike algebras, we obtain several different algorithms. It turns out that for a particular choice of multicharge, our algorithm  is equivalent to Xu's algorithm. This thus gives a new elementary proof for the fact that the Mullineux and the Xu's algorithm give an answer for the Mullineux problem. This also gives a new  interpretation of these algorithms (another  interpretation is also given in \cite{BK}).
  
  The paper will be organized as follows. In section $2$, we recall some basic facts on  the representation theory of affine Hecke algebras of type $A$ and of Ariki-Koike algebras. We also recall several  results coming from  \cite{JL,JL3} 
   concerning the labelling of the simple modules for these algebras and the relations between them. Section $3$ introduces the Mullineux and the Iwahori-Matsumoto involutions and shows how these two maps are related. In section $4$, we study  combinatorial properties of partitions and multipartitions which will be used in the following sections. Section $5$ 
 gives the algorithms we get for computing the Mullineux involution. The last section shows that Xu's algorithm can be seen as one of our algorithm. \\
 \\

\noindent {\bf Acknowledgement:} The author  thanks C\'edric Lecouvey for fruitful discussions on the subject of this paper. The author is supported by ANR project JCJC ANR-18-CE40-0001.

\section{Hecke algebras}

In this first section, we recall the definitions of  the affine Hecke algebra of type $A$ and of the Ariki-Koike algebras.   We then give a brief overview of their  representation theories.  Finally, we explain the relations between the known parametrizations of the simple modules for these algebras.  The main references for these parts are
 \cite{A} and \cite{GJ}. 

%
%
%
%
%
%
%
%

\subsection{Affine Hecke algebra of type $A$}

Let $n\in \mathbb{Z}_{>0}$. Let  $q \in \mathbb{C}^{*}$ be a primitive root of unity of order $e>1$. The {\it Iwahori-Hecke algebra} $H_n (q)$ of type $A$ is the unital associative $\mathbb{C}$-algebra 
 generated by $T_0$, $T_1$, \ldots, $T_{n-1}$ and subject to the relations:
 $$\begin{array}{rcl}
 T_i T_{i+1} T_i &=& T_{i+1} T_i T_{i+1} \ (i=1,\ldots,n-2),\\
 T_i T_j &=&T_j T_i \ (|i-j|>1),\\
 (T_i-q)(T_i+1)&=&0 \ (i=1,\ldots,n-1).
 \end{array}$$
The  {\it affine Hecke algebra} $\widehat{H}_n (q)$ is the unital associative $\mathbb{C}$-algebra which is isomorphic to 
$$H_n (q)\otimes_{\mathbb{C}} \mathbb{C} [X_1^{\pm 1} ,\ldots, X_n^{\pm 1} ],$$
as a $\mathbb{C}$-vector space and such that $\widehat{H}_n (q)$ and $\mathbb{C} [X_1^{\pm 1} ,\ldots, X_n^{\pm 1} ]$ are both subalgebras of 
 $\widehat{H}_n (q)$  with the following   additional relations:
$$T_i X_i T_i = qX_{i+1} ,\ T_i X_j =X_i T_j,$$
for all $(i,j)\in \{1,\ldots,n-1\}^2$ with $i\neq j$.

We denote by $\operatorname{Mod}_n$ the category of finite dimensional $\widehat{H}_n (q)$-modules  such that 
 for all $j=1,\ldots,n$, the eigenvalues of the $X_j$ are power of $q$.  The simple objects $\operatorname{Irr} (\widehat{H}_n (q))$ in $\operatorname{Mod}_n$ can be naturally labeled by the set of aperiodic multisegments that we now define:

\begin{Def}
Let $l\in \mathbb{N}_{>0}$ and let $i \in \mathbb{Z}/e\mathbb{Z}$. The {\it segment} of length $l$ and head $i$ is the sequence of consecutive {\it residues} (i.e elements of $\mathbb{Z}/e\mathbb{Z}$, identified with $\{0,1,\ldots,e-1\}$)   $[i,i+1,\ldots, i+l-1]$ in $\mathbb{Z}/e\mathbb{Z}$. The 
 residue $i\in \mathbb{Z}/e\mathbb{Z}$ is then called the {\it head} of the segment and the residue $i+l-1$ the {\it tail} of the segment.  
 A {\it multisegment} is a formal sum of segments. A multisegment is said to be {\it aperiodic} if for every $l\in \mathbb{Z}_{>0}$, there exists $i\in \mathbb{Z}/e\mathbb{Z}$ such that there is 
  no segment with length $l$ and tail $i$ appearing in the multisegment. We denote by $\mathfrak{M}_e$ the set of aperiodic multisegments. 
   The length of a multisegment is the sum of the lengths of the the segments appearing in it and is denoted by $|\psi|$. We denote by 
   $\mathfrak{M}_e (n)$ the set of aperiodic multisegments of length $n$. 
\end{Def}

\begin{exa}
For $e=3$, the multisegment:
$$[0,1,2,0]+[0]+[1]+[1,2]+[2,0]$$
is an aperiodic multisegment of length $10$ where as 
$$[0,1,2,0]+[0]+[0,1]+[1,2]+[2,0]$$
is a multisegment of length $10$ which is not aperiodic. 
\end{exa}

By the geometric realization of 
 $\widehat{H}_n (q)$ by Chriss and Ginzburg \cite{CG}, we know  that one may naturally label the simple modules in  $\operatorname{Mod}_n$  
  by the set $\mathfrak{M}^e (n)$  of aperiodic multisegments of length $n$.  We thus have:
$$\operatorname{Irr} (\widehat{H}_n (q))=\{ L_{\psi} \ |\ \psi \in \mathfrak{M}^e (n)\}$$

\subsection{Ariki-Koike algebras}\label{act}
 As above, we fix a primitive root of unity $q\in \mathbb{C}^*$ of order $e>1$. Let $P_l:=\mathbb{Z}^l$ and let $\{ z_i\ |\ i=1,\ldots ,l\}$ be the canonical basis of $P_l$.  Let $\mathfrak{S}_l$ be the symmetric group generated by the transpositions $\sigma_i:=(i,i+1)$ for $i=1,\ldots,l-1$. 
The extended affine symmetric group  $\widehat{\mathfrak{S}}_l$ is  the semidirect product  $P_l \rtimes \mathfrak{S}_l$ with  the relations  given by $\sigma_i z_j=z_j \sigma_i$ for $j\neq i,i+1$ and $\sigma_i z_i \sigma_i=z_{i+1}$ for $i=1,\ldots,l-1$ and $j=1,\ldots,l$.   This group is generated by the $\sigma_i$ for $i=1,\ldots,l-1$ and by 
 $\tau:=z_l \sigma_{l-1}\ldots \sigma_1$ (see \cite[\S 5.1]{JL}.)

It  acts faithfully on $\mathbb{Z}^l$   as follows: for any ${\bf s}=(s_{1},\ldots ,s_{l})\in 
\mathbb{Z}^{l}$: 
$$\begin{array}{rcll}
\sigma _{c}.{{\bf s}}&=&(s_{1},\ldots ,s_{c-1},s_{c+1},s_{c},s_{c+2},\ldots ,s_{l})&\text{for }c=1,\ldots,l-1 \text{ and }\\
z_i.{{\bf s}}&=&(s_{1},s_{2},\ldots,s_i+e,\ldots ,s_{l})&\text{for }i=1,\ldots,l.
\end{array}$$
and we have 
$$\tau.{\bf s}=(s_2,\ldots,s_l,s_1+e).$$
 Let $\mathfrak{s}$ be an orbit with respect to the above action and let  ${\bf s}:=(s_1,\ldots,s_l) \in \mathbb{Z}^l$ be an element in this orbit.  
 The Ariki-Koike algebra $\mathcal{H}_n^{\mathfrak{s}} (q)$ 
 is the quotient $\widehat{H}_n (q)/ I_{\mathfrak{s}}$ where $I_{\mathfrak{s}}:=\langle \prod_{1\leq j\leq l} (X_1-q^{s_j}) \rangle $.
  If $l=1$, this is a Hecke algebra of type $A$ (of finite type), and if $l=2$ a Hecke algebra of type $B$ (of finite type). One can see that the above algebra is well defined and  depends only on 
   the orbit of ${\bf s}$ modulo the action of  $\widehat{\mathfrak{S}}_l$ (and on $q$).

  The representation theory of this algebra has been  intensively studied in a  number of works. We refer to \cite{A,GJ} and the references theirin. We will only recall what is needed for the results of the present paper.   The analogues of the multisegments in the context of Ariki-Koike algebras are the multipartitions that we now define.  For this, let us give some additional combinatorial definitions.  
  
 A {\it partition} is a nonincreasing
sequence $\lambda=(\lambda_{1},\cdots,\lambda_{m})$ of nonnegative
integers. One can assume this sequence is infinite by adding parts equal to
zero. The {\it rank}  of the partition is by  definition the number $|\lambda|=\sum_{1\leq i\leq m} \lambda_i$. 
 We say that $\lambda$ is a partition of $n$, where $n=|\lambda|$. By convention, the unique partition of $0$ is the empty partition $\emptyset$. 

More generally, for $l\in \mathbb{Z}_{>0}$, an {\it $l$-partition} $\ulambda$ of $n$ is a sequence of $l$ partitions $(\lambda^1,\ldots,\lambda^l)$ 
 such that the sum  of the ranks of the $\lambda^j$ is $n$. The number $n$ is then called  the {\it rank} of $\ulambda$ and it is denoted by $|\ulambda|$.  The set of $l$-partitions is denoted by $\Pi^l$ and the set  of $l$-partitions of rank $n$  is denoted by $\Pi^l (n)$.  Let $\ulambda$ be an $l$-partition. The {\it nodes} or the {\it boxes}  of $\ulambda$ are by definition the elements of   the Young diagram of $\ulambda$:
$$[\ulambda]:=\{ (a,b,c)  \ | \ a\geq 1,\ c\in \{1,\ldots,l\},\ 1\leq b\leq \lambda_a^c\} \subset \mathbb{Z}_{>0}\times 
  \mathbb{Z}_{>0} \times \{1,\ldots,l\}.$$
  The {\it content}  of  a node $\gamma=(a,b,c)$ of $\ulambda$ is the element $b-a+s_c$  of $\mathbb{Z}$ and the residue is the content modulo $e\mathbb{Z}$.  
  If $l=1$ (that is when we consider a partition instead of a multipartition), then the Young diagram is identified with a subset of $\mathbb{Z}_{>0}\times 
  \mathbb{Z}_{>0}$ in an obvious way.

  Since the works of Ariki and Lascoux-Leclerc-Thibon, it is known that the representation theory of these algebras  is closely related to the representation theory of quantum groups. In particular, one can naturally label the simple modules by the crystal basis of a certain integrable representation for the quantum group of affine type $A$. 
  We will not give the details of all the consequences of this fact but we summarize this below. Again, we refer to \cite{GJ} for a complete study.  
   For all choice of ${\bf s}\in {\mathfrak{s}}$, we can define a certain subset of $l$-partitions which are called Uglov $l$-partitions and which are denoted by $\Phi_{e,{\bf s}}(n)$.   
    These classes of  multipartitions, which strongly depends on  the choice of ${\bf s}$, can all be seen as non trivial generalizations of the set of  $e$-regular partitions:
   
   \begin{itemize}
   \item For all $s\in \mathbb{Z}$, we define:
   $$\mathcal{A}_e^l[s] :=\{ (s_1,\ldots,s_l) \in \mathbb{Z}^l \ |\  s_1=s\leq s_2  \leq \ldots s_l <s+e\}.$$
  This is a fundamental domain  for the action of  $\widehat{\mathfrak{S}}_l$ on $\mathbb{Z}^l$. 
If  ${\bf s}\in \mathcal{A}_e^l[s]$,     then the $l$-partitions in $\Phi_{e,{\bf s}} (n)$  are known as FLOTW $l$-partitions and they have a non recursive definitions: 
 we have $\ulambda=(\lambda^1,\ldots,\lambda^l)\in \Phi_{{\bf s},e} (n)$ 
 if and only if:
 \begin{enumerate}
 \item For all $j=1,\ldots,l-1$ and $i\in \mathbb{Z}_{>0}$, we have:
 $$\lambda_i^j\geq \lambda_{i+s_{j+1}-s_j}^{j+1}.$$
 \item For all $i\in \mathbb{Z}_{>0}$, we have:
 $$\lambda_i^{l}\geq \lambda_{i+e+s_{1}-s_l}^{1}.$$ 
 \item For all $k\in \mathbb{Z}_{>0}$, the set 
 $$\{ \lambda_i^j-i+s_j+e\mathbb{Z}\ |\ i\in \mathbb{Z}_{>0},\ \lambda_i^j=k, j=1,\ldots,l\},$$
 is a proper subset of $\mathbb{Z}/e\mathbb{Z}$. 
 
 \end{enumerate}
 
   \item If ${\bf s}$ satisfies for all $i=1,\ldots,l-1$, $s_{i+1}-s_i>n-1$  (we say that ${\bf s}$ is {\it very dominant}, it is also sometimes referred as the ``asymptotic case'' in the literature) then the set $\Phi_{e,{\bf s}} (n)$ is known as the set  Kleshchev $l$-partitions.  If ${\bf s}''$ satisfy the same property, then the associated set $\Phi_{e,{\bf s}"} (n)$ is the same.

   \item If $l=1$, the set  $\Phi_{e,(s)} (n)$ is simply the set of $e$-regular partitions   
   \end{itemize}
  It turns out that each set $\Phi_{e,{\bf s}}(n)$ with ${\bf s}\in {\mathfrak{s}}$ gives a natural labelling for the irreducible representations of the Ariki-Koike algebra $\mathcal{H}_n^{\mathfrak{s}} (q)$. As a consequence, 
there are several natural possibilities for the labelling of the simple modules of $\mathcal{H}_n^{\mathfrak{s}} (q)$, one for each choice of an 
 element in the orbit $\mathfrak{s}$. For more details on these parametrizations, we refer to \cite{GJ}. Thus,  one can write:
 $$\operatorname{Irr} ( \mathcal{H}_n^{\mathfrak{s}} (q) )=\{ D^{\ulambda}_{\bf s} \ |\ \ulambda \in \Phi_{e,{\bf s}} (n)\}.$$
 By \cite{B}, each of these labellings has an interpretation in terms of a cellular structure.  
Last,  clearly, if  ${\bf s}$ and ${\bf s}'$ in the same orbit, 
there is a bijection:
$$\Psi_e^{{\bf s} \to {\bf s}'} : \Phi_{(e,{\bf s})} (n)  \to \Phi_{(e,{\bf s}')} (n),$$
which is uniquely defined as follows. For all $\ulambda \in \Phi_{(e,{\bf s})} (n)$ then:
$$D^{\ulambda}_{\bf s} \simeq D^{\Psi_e^{{\bf s} \to {\bf s}'}  (\ulambda)}_{{\bf s}'}.$$
This bijection has been explicitly described in \cite{JL} in a combinatorial way using crystal isomorphisms (the coincidence of  the crystal isomorphisms with  these bijections is proved in  \cite[Prop. 3.7]{Jone}.)  We recall this description subsection (a program in GAP3 is available for computing it in all cases \cite{algo}). In the next sections,  the following particular case: 
 ${\bf s}=(s_1,s_2)$ and ${\bf s}'=(s_1,s_2+e)$ will be of particular interest. 
 
 \begin{Rem}
 If ${\bf s}'$ and ${\bf s}''$ are both very dominant multicharges in the same orbit then $\Psi_e^{{\bf s} \to {\bf s}'}$ is the identity.

 \end{Rem}

 \begin{exa}\label{exa}
 Assume that $e=3$. Take $\mathfrak{s}=(0+3\mathbb{Z},1+3\mathbb{Z})$. Take $n=3$, then, we have
 $$\Phi_{3,(0,1)}(3)=\{ (\emptyset,(3)), ((1),(1,1)), ((1),(2)), ((2),(1)),  ((2,1),\emptyset), ((3),\emptyset) \}$$
 $$\Phi_{3,(0,4)}(3)=\{ (\emptyset,(3)), ((1),(1,1(), ((1),(2)), ((2),(1)),  (\emptyset,(2,1)), ((1,1),(1))\}$$
  $$\Phi_{3,(1,0)}(3)=\{ ((3),\emptyset), ((1),(1,1)), ((1),(2)), ((1,1),(1)),  ((2,1),\emptyset), ((2),(1))\}=\Phi_{3,(4,0)}(3)$$
So that :
    $$\begin{array}{rcl}
    \operatorname{Irr} ( \mathcal{H}_n^{\mathfrak{s}} (q) )&=&\{ D_{(0,1)}^{(\emptyset,3)}   , D_{(0,1)}^{((1),(1,1))} ,D_{(0,1)}^{((1),(2))},D_{(0,1)}^{((2),(1))},D_{(0,1)}^{((2,1),\emptyset)} ,  D_{(0,1)}^{((3),\emptyset)}     \}\\
    &=&\{ D_{(0,4)}^{(\emptyset,(3))}   , D_{(0,4)}^{((1),(1,1))} ,D_{(0,4)}^{((1),(2))},D_{(0,4)}^{((2),(1))},D_{(0,4)}^{(\emptyset,(2,1))} ,  D_{(0,4)}^{((1),(1,1))} \}\\    
        &=&\{ D_{(1,0)}^{(3,\emptyset)}   , D_{(1,0)}^{((1),(1,1))} ,D_{(1,0)}^{((1),(2))},D_{(1,0)}^{((1,1),(1))},D_{(1,0)}^{((2,1),\emptyset)} ,  D_{(1,0)}^{((2),(1))} \}\\    
    \end{array}
    $$

 \end{exa}

\subsection{Description of the crystal isomorphisms}

First let us assume that $l=2$ and that   $(s_1,s_2)\in \mathbb{Z}^2$. Let $\ulambda \in \Phi_{(e,{\bf s})} (n)$.  We follow the presentation in  \cite{JL}. 

 We define 
the minimal integer $d \geq \vert s_1 - s_2\vert$ such that $\lambda^1_{d+1+s_1-s_2} = \lambda^2_{d + 1} = 0$ if $s_2 \geq s_1$, and otherwise the minimal integer $d\geq \vert s_1 - s_2\vert$ such that $\lambda^2_{d+1+s_2-s_1} = \lambda^1_{d + 1} = 0$.
 To $(\lambda^{1},\lambda^{2})$, we associate its
${\bf s}$-symbol of length $d$.  This is the following two-rows array. 
\begin{itemize}
\item If $s_1\leq s_2$ then:
$$S (\lambda^{1},\lambda^2)=\left(
\begin{array}{lllll}
s_{2}-d+\lambda_{d}^{2} & \ldots & \ldots & s_{2}-2+\lambda_{2}^{2} & s_{2}+\lambda_{1}^{2}-1 \\
s_{2}-d+\lambda_{d+s_{1}-s_{2}}^{1} & \ldots & s_{1}+\lambda_{1}^{1}-1
\end{array}
\right)$$
\item if $s_1> s_2$ then:
$$S (\lambda^{1},\lambda^2)=\left(
\begin{array}{lllll}
s_{1}-d+\lambda_{d+s_{2}-s_{1}}^{2} & \ldots & s_{2}+\lambda_{1}^{2}-1\\
s_{1}-d+\lambda_{d}^{1} & \ldots & \ldots & s_{1}-2+\lambda_{2}^{1} & s_{1}+\lambda_{1}^{1}-1 
\end{array}
\right)$$

\end{itemize}

 We will write $S(\lambda^{1},\lambda^{2})=\binom{L_{2}}{L_{1}}$ where the
top row (resp. the bottom row) corresponds to $\lambda^{2}$ (resp.
$\lambda^{1}$).  Of course, it is easy to recover the $2$-partition from the datum of its symbol. 
From this symbol, we define a new symbol 
 $\binom{\widetilde{L}_{2}}{\widetilde{L}_{1}}$ as follows.

\begin{itemize}
\item 
Suppose first $s_{2}\geq s_{1}.\;$Consider $x_{1}=\min\{t\in
L_{1}\}.\;$We associate to $x_{1}$ the integer $y_{1}\in L_{2}$ such that
\begin{equation}
y_{1}=\left\{
\begin{array}
[c]{l}%
\max\{z\in L_{2}\mid z\leq x_{1}\}\text{ if }\min\{z\in L_{2}\}\leq x_{1},\\
\max\{z\in L_{2}\}\text{ otherwise.}%
\end{array}
\right.  \label{algo1}%
\end{equation}
We repeat the same procedure to the lines $L_{2}-\{y_{1}\}$ and $L_{1}%
-\{x_{1}\}.$ By induction this yields a sequence $\{y_{1},...,y_{d+s_{1}%
-s_{2}}\}\subset L_{2}.\;$Then we define $\widetilde{L}_{2}$ as the line
obtained by reordering the integers of $\{y_{1},...,y_{d+s_{2}-s_{1}}\}$ and
$\widetilde{L}_{1}$ as the line obtained by reordering the integers of
$L_{2}-\{y_{1},...,y_{d+s_{1}-s_{2}}\}+L_{1}$ (i.e. by reordering the set
obtained by replacing in $L_{2}$ the entries $y_{1},...,y_{d+s_{1}-s_{2}}$
by those of $L_{1}$).  We obtain a ``symbol''   $\binom{\widetilde{L}_{2}}{\widetilde{L}_{1}}$.

\item  Now, suppose $s_{2}<s_{1}.\;$Consider $x_{1}=\min\{t\in L_{2}%
\}.\;$We associate to $x_{1}$ the integer $y_{1}\in L_{1}$ such that
\begin{equation}
y_{1}=\left\{
\begin{array}
[c]{l}%
\min\{z\in L_{1}\mid x_{1}\leq z\}\text{ if }\max\{z\in L_{1}\}\geq x_{1},\\
\min\{z\in L_{1}\}\text{ otherwise.}%
\end{array}
\right.  \label{algo2}%
\end{equation}
We repeat the same procedure to the lines $L_{1}-\{y_{1}\}$ and $L_{2}%
-\{x_{1}\}$ and obtain a sequence $\{y_{1},...,y_{d+s_{1}-s_{2}}\}\subset
L_{1}.\;$Then we define $\widetilde{L}_{1}$ as the line obtained by reordering
the integers of $\{y_{1},...,y_{d+s_{2}-s_{1}}\}$ and $\widetilde{L}_{2}$ as
the line obtained by reordering the integers of $L_{1}-\{y_{1}%
,...,y_{d+s_{2}-s_{1}}\}+L_{2}.$ 
 We obtain a ``symbol''  $\binom{\widetilde{L}_{2}}{\widetilde{L}_{1}}$. 
\end{itemize} 
The new symbol  $\binom{\widetilde{L}_{2}}{\widetilde{L}_{1}}$ that we obtain is canonically associated to a bipartition $(\overline{\lambda}^1,\overline{\lambda}^2)$ and the multicharge $(s_2,s_1)$. The crystal isomorphisms
 in the case $l=2$ are thus entirely determined from the following results proved in \cite{JL}:
 \begin{enumerate}
 \item We have $\Psi_e^{(s_1,s_2) \to \sigma_1(s_1,s_2)} (\lambda^1,\lambda^2)=(\overline{\lambda}^1,\overline{\lambda}^2)$. 
 \item We have $\Psi_e^{(s_1,s_2) \to \tau.(s_1,s_2)} (\lambda^1,\lambda^2)=(\lambda^2,\lambda^1)$. 
 \item For all $\sigma=x_1.\ldots.x_m\in  \widehat{\mathfrak{S}}_2$ with $x_i \in \{\sigma_1,\tau\}$ for all $i=1,\ldots,m$, we have:
 $$\Psi_e^{(s_1,s_2) \to \sigma.(s_1,s_2)} =\Psi_e^{x_2.\ldots.x_{m} .(s_1,s_2) \to \sigma .(s_1,s_2)} \circ \ldots \circ \Psi_e^{(s_1,s_2) \to x_m. (s_1,s_2)} $$
 \end{enumerate}
 In the general case $l\in \mathbb{N}_{>0}$ and ${\bf s}\in \mathbb{Z}^l$, now:
  \begin{enumerate}
 \item  For all $c=1,\ldots,l-1$, we have  $\Psi_e^{(s_1,s_2) \to \sigma_c(s_1,s_2)} (\ulambda)=\umu$,
  where $\mu^j=\lambda^j$ for all $j\neq c,c+1$, $\mu^c=\overline{\lambda}^c$ and  $\mu^{c+1}=\overline{\lambda}^{c+1}$.
 \item We have $\Psi_e^{{\bf s} \to \tau. {\bf s}} (\ulambda)=(\lambda^2,\ldots,\lambda^l,\lambda^1)$. 
 \item For all $\sigma=x_1.\ldots.x_m\in  \widehat{\mathfrak{S}}_2$ with $x_i \in \{\sigma_1,\ldots,\sigma_{l-1},\tau\}$ for all $i=1,\ldots,m$, we have:
 $$\Psi_e^{{\bf s} \to \sigma.{\bf s}} =\Psi_e^{x_2.\ldots.x_{m} .{\bf s} \to \sigma. {\bf s}} \circ \ldots \circ \Psi_e^{{\bf s} \to x_m .{\bf s}} $$
 \end{enumerate}

\begin{exa}\label{exacomp}
Assume that $(s_1,s_2)\in \mathbb{Z}^2$ with $s_1\leq s_2$. In the next sections, we will be particularly interested in the computation of 
 $\Psi_e^{(s_1,s_2) \to (s_1,s_2+e)}$. Let $\ulambda=(\lambda^1,\lambda^2)\in \Phi_{(e,{\bf s})} (n)$, we then write its symbol:
 $$S (\lambda^{1},\lambda^2)=\left(
\begin{array}{lllll}
s_{2}-d+\lambda_{d}^{2} & \ldots & \ldots & s_{2}-2+\lambda_{2}^{2} & s_{2}+\lambda_{1}^{2}-1 \\
s_{2}-d+\lambda_{d+s_{1}-s_{2}}^{1} & \ldots & s_{1}+\lambda_{1}^{1}-1
\end{array}
\right).$$
We then perform the above algorithm to obtain a new symbol $\binom{\widetilde{L}_{2}}{\widetilde{L}_{1}}$  which must be of the form :
$$\left(\begin{array}{llllll}
y_{d+s_{1}-s_{2}} & \ldots & y_{1} \\
x_{d} & \ldots & \ldots & x_{2} & x_{1} 
\end{array}
\right)$$
We then consider the following symbol:
$$\left(\begin{array}{llllllll}
0 & \ldots & e-1 & x_{d} +e & \ldots & \ldots & x_{2} +e & x_{1} + e \\
y_{d+s_{1}-s_{2}} & \ldots & y_{1} \\
\end{array}
\right)$$
By the discussion above, this is the $(s_1,s_2+e)$-symbol of the bipartition  $\Psi_e^{(s_1,s_2) \to (s_1,s_2+e)}(\lambda^1,\lambda^2)$ (more details and examples can be found in \cite{JB})
\end{exa}
 \begin{exa}\label{exa2} We keep the example \ref{exa}, one can check that the map $\Psi_e^{(0,1) \to (0,4)}$ is given as follows
$$
\begin{array}{cccc}
\Psi_e^{(0,1) \to (0,4)}:& \Phi_{3,(0,1)} & \to & \Phi_{3,(0,4)}\\
 &  (\emptyset,(3)) & \mapsto &  (\emptyset,(3)) \\
 &  ((1),(1,1)) & \mapsto &  ((1),(1,1)) \\
 &  ((1),(2)) & \mapsto &  (\emptyset,(2,1)) \\
 &  ((2),(1)) & \mapsto &  ((2),(1)) \\
 &  ((2,1),\emptyset) & \mapsto &  ((1,1),(1)) \\
  &  ((3),\emptyset) & \mapsto &  ((1),(2)) \\
\end{array}$$
More examples can be found  in \cite{JL}. 

 \end{exa}

\subsection{Aperiodic multisegments and multipartitions}\label{multis}

Let $\mathfrak{s}$ be an orbit of $\mathbb{Z}^l$ with respect to the action of the affine symmetric group (recall the definition of the action in \S \ref{act}). If $V$ is a simple module 
 for the Ariki-Koike algebra  then it is also a simple  $\widehat{H}_n (q)$-module in the category  $\operatorname{Mod}_n$. Hence there exists a unique 
  aperiodic multisegment $\psi$ such that $V \simeq L_{\psi}$ (as a $\widehat{H}_n (q)$-module).  As a consequence, far any ${\bf s} \in\mathfrak{s}$
  we have a well defined map:
  $$\chi^n_{e,{\bf s}} : \Phi_{(e,{\bf s})} (n) \to \mathfrak{M}_e (n),$$
  which is defined as follows. Let $\ulambda \in \Phi_{(e,{\bf s})} (n)$, then we have a unique $\chi^n_{(e,{\bf s})} (\ulambda)\in \mathfrak{M}_e (n)$ such that:
  $$D^{\ulambda}_{\bf s} \simeq L_{\chi^n_{e,{\bf s}} (\ulambda)}.$$
 By \cite{AJL}, this map may be described as follows:
\begin{itemize}
\item   Assume first that ${\bf s}\in \mathcal{A}_e^l[s]$ for all non zero part $\lambda^c_i$ of $\ulambda$, we associate the segment 
$$[(1-i+s_c)+e\mathbb{Z},\ldots,\lambda^c_i-i+s_c].$$
By \cite{AJL},  The multisegment 
$\chi^n_{e,{\bf s}} (\ulambda)$ is just the formal sum of all the segments associated to the non zero part of $\ulambda$.
\item  As a consequence, in general, if ${\bf s}'\in \mathfrak{s}$. Let ${\bf s} \in \mathcal{A}_e^l[s]\cap \mathfrak{s}$, then
$$\chi^n_{e,{\bf s}'} (\ulambda)=\chi^n_{e,{\bf s}} (\Psi_e^{{\bf s}'\to {\bf s}}  (\ulambda)).$$
\end{itemize}
Given an aperiodic  multisegment $\psi$, 
It is now natural to try to find the multicharges ${\bf s} $  such that $\psi$ as an antecedent for the map $\chi^n_{e,{\bf s}}$. This question has been completely  solved in \cite{JL3}.  There always exist such multicharges (they are non unique in general)  which  are called {\it admissible multicharges}.   By \cite{AJL},  $\chi^n_{e,{\bf s}} $ is injective so that if ${\bf s}$ is admissible for $\psi$  there exists a unique 
  $\ulambda$ such that $\chi^n_{e,{\bf s}} (\ulambda)=\psi$. This $l$-partition will be called {\it admissible} (with respect to $\psi$).  By definition, we have the following proposition where we use the following notation. For ${\bf s}$ and ${\bf t}$ two multicharges, we denote ${\bf s} \subset {\bf t}$ if and only if, for all $j\in \mathbb{Z}/e\mathbb{Z}$, the number 
   of integers conguent to $j$ in ${\bf s}$ is less or equal to the number 
   of integers conguent to $j$ in ${\bf t}$.
   
     \begin{Prop}\label{adm}
  Assume that $\ulambda\in  \Phi_{(e,{\bf s})} (n)$  then ${\bf t}$ is admissible for the multisegment 
   $\chi^n_{e,{\bf s}} (\ulambda)$ if and only if ${\bf s} \subset {\bf t}$. 
  \end{Prop}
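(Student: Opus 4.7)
The plan is to prove the two implications separately, both times reducing to the FLOTW regime via the bijections $\Psi_e^{\bullet \to \bullet}$ and the compatibility formula $\chi^n_{e,{\bf s}'}(\ulambda)=\chi^n_{e,{\bf s}}(\Psi_e^{{\bf s}'\to {\bf s}}(\ulambda))$ recalled in the preceding subsections. Two elementary observations drive the argument: first, the segment $[(1-i+s_c)+e\Z,\ldots,\lambda^c_i-i+s_c]$ attached to a non-zero part $\lambda^c_i$ depends on $s_c$ only through its class modulo $e$; second, the extended affine symmetric group action preserves residues modulo $e$, so the relation ${\bf s}\subset {\bf t}$ is invariant under replacing ${\bf s}$ and ${\bf t}$ by FLOTW representatives of their respective orbits.

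For the direction ${\bf s}\subset {\bf t}\Rightarrow {\bf t}$ admissible, I would first replace ${\bf s}$ and ${\bf t}$ by FLOTW representatives in $\mathcal{A}_e^l[s]$ and $\mathcal{A}_e^{l'}[s']$ respectively, using the compatibility formula to transport $\ulambda$. The assumption ${\bf s}\subset {\bf t}$ now says that the multiset of residues of ${\bf s}$ modulo $e$ is included, with multiplicity, in that of ${\bf t}$. By shifting selected components of ${\bf t}$ by multiples of $e$ and permuting (all operations lying in the orbit of ${\bf t}$), I can arrange that the $l$ components of ${\bf s}$ appear, in the same relative order, at $l$ specified positions of ${\bf t}$. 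I then build a candidate $\umu=(\mu^1,\ldots,\mu^{l'})\in \Pi^{l'}(n)$ by placing the components of $\ulambda$ at the matched positions and the empty partition everywhere else. Two things must be verified: \emph{(i)} $\umu\in \Phi_{(e,{\bf t})}(n)$, by inspecting each of the three FLOTW inequalities, each of which reduces either to an inequality already satisfied by $\ulambda$ (when both indices land on non-empty components) or becomes trivial (when an empty component is involved); \emph{(ii)} $\chi^n_{e,{\bf t}}(\umu)=\chi^n_{e,{\bf s}}(\ulambda)$, which is immediate from the segment formula since empty components contribute no segments and matched components contribute the same segments, thanks to the modulo $e$ invariance.

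The converse direction is more delicate. Given $\umu\in \Phi_{(e,{\bf t})}(n)$ with $\chi^n_{e,{\bf t}}(\umu)=\chi^n_{e,{\bf s}}(\ulambda)$, I again reduce to FLOTW representatives and compare the two presentations of the same multisegment $\psi$. The multiset of residues of ${\bf s}$ modulo $e$ is encoded in $\psi$ via the heads of the segments attached to the first non-empty rows of the various components of $\ulambda$ (for $i=1$ the head is $s_c$ modulo $e$), and likewise for $({\bf t},\umu)$. A residue-by-residue comparison, combined with the injectivity of $\chi^n_{e,{\bf t}}$ from \cite{AJL}, then yields the inequality ${\bf s}\subset {\bf t}$.

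The main obstacle is precisely this last step, namely showing that the multisegment $\psi$ ``remembers'' enough about the residues of the underlying multicharge to force the inclusion. A cleaner conceptual route invokes crystal theory: $\ulambda$ and $\umu$ are elements of the highest weight crystals $B(\Lambda_{\bf s})$ and $B(\Lambda_{\bf t})$ for $\widehat{\mathfrak{sl}}_e$, where $\Lambda_{\bf s}=\sum_c \Lambda_{s_c \bmod e}$; the Kashiwara operators intertwine $\chi$ with the natural crystal structure on aperiodic multisegments, and a weight computation forces $\Lambda_{\bf t}-\Lambda_{\bf s}$ to be a non-negative integer combination of fundamental weights, which is precisely the inclusion ${\bf s}\subset {\bf t}$. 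The combinatorial version should follow the same lines, using the characterization of admissible multicharges from \cite{JL3}.
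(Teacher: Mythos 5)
There is a genuine gap in the forward direction (${\bf s}\subset{\bf t}$ implies ${\bf t}$ admissible), which is the substantive half of the statement. Your candidate antecedent $\umu$, obtained by copying the components of $\ulambda$ into matched positions of ${\bf t}$ and padding with empty partitions, is in general \emph{not} an Uglov (FLOTW) multipartition: the FLOTW inequalities do not become trivial when an empty component is involved. Condition (1) applied to an empty $\mu^j$ followed by a non-empty $\mu^{j+1}$ forces $\mu^{j+1}_{1+t_{j+1}-t_j}=0$, and the cyclic condition (2) forces $\mu^1_{1+e+t_1-t_{l'}}=0$ whenever $\mu^{l'}=\emptyset$; neither is automatic, and since the gaps between entries of a representative in $\mathcal{A}_e^{l'}[s']$ are bounded by $e-1$, you cannot make these constraints harmless by re-choosing the representative. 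The paper's own Section 4 already refutes the construction: for $e=4$, $\lambda=(8,8,6,6,4,3,3,2,1,1)$ and ${\bf t}=(0,2)$, the admissible bipartition is $((8,8,3,2,1,1),(6,6,4,3))$ --- the rows of $\lambda$ get redistributed between the two components --- whereas your candidate $(\lambda,\emptyset)$ violates condition (2), since it would require $0=\emptyset_1\geq\lambda_{1+4+0-2}=\lambda_3=6$. So step \emph{(i)} of your verification fails, and producing the correct $\umu$ requires the nontrivial peeling/redistribution procedure of \cite{JL3} (the map $\theta^n_{e,{\bf s}}$ of Section 4.1). The converse direction is also not established: equality of the two multisegments only gives equality of the ``box'' contributions to the weights, i.e. $\Lambda_{\bf t}-\Lambda_{\bf s}=\operatorname{wt}(\umu)-\operatorname{wt}(\ulambda)$, and you give no argument for why this difference must be a non-negative integer combination of fundamental weights.

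The paper sidesteps all of this combinatorics with a two-line module-theoretic argument: since $q$ has order $e$, the hypothesis ${\bf s}\subset{\bf t}$ says exactly that $\prod_{j}(X_1-q^{s_j})$ divides $\prod_{j}(X_1-q^{t_j})$ as polynomials in $X_1$; the first product annihilates $D^{\ulambda}_{\bf s}\simeq L_{\chi^n_{e,{\bf s}}(\ulambda)}$, hence so does the second, so $L_{\chi^n_{e,{\bf s}}(\ulambda)}$ descends to a simple $\mathcal{H}_n^{\mathfrak{t}}(q)$-module and ${\bf t}$ is admissible. If you want a purely combinatorial proof (the route the author's Remark alludes to), you would need the explicit description of admissible multicharges and multipartitions from \cite{JL3} rather than the empty-padding shortcut.
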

  \begin{proof}
  Set ${\bf s}=(s_1,\ldots,s_l)$ and ${\bf t}=(t_1,\ldots,t_m)$. 
  Assume that $\ulambda\in  \Phi_{(e,{\bf s})} (n)$ then as a $\widehat{H}_n (q)$-module, we have that  $\prod_{1\leq j\leq l} (X_1-q^{s_j}) $ acts as $0$ on 
   $D^{\ulambda}_{\bf s} \simeq L_{\chi^n_{e,{\bf s}} (\ulambda)}$. As a consequence, as  ${\bf s} \subset {\bf t}$, we have that 
     $\prod_{1\leq j\leq m} (X_1-q^{t_j}) $ acts as $0$ on 
   $L_{\chi^n_{e,{\bf s}} (\ulambda)}$. This implies that it is a well-defined  $\mathcal{H}_n^{\mathfrak{t}} (q)$-module and the result follows. 

  \end{proof}
  \begin{Rem}
  One can also prove the above proposition combinatorially using the descriptions of the admissible multicharges. 
  
  \end{Rem}

%
%

\section{The Mullineux and the Iwahori-Matsumoto involutions}

The aim of this section is to introduce the Mullineux involution for the symmetric group and its analogues in the context of  Ariki-Koike algebras and  affine Hecke algebras.

\subsection{Iwahori-Matsumoto involution for affine Hecke algebras of type $A$}

We have an involution $\sharp$  on  $\widehat{H}_n (q)$   which has been defined by
 Iwahori and Mastumoto  in \cite{IM}: 
 $$T_i^{\sharp}=-q T_{i}^{-1},\ X_j^{\sharp}=X_j^{-1}$$
 for $i=1,\ldots,n-1$ and $j=1,\ldots,n$. 
The Iwahori-Matsumoto involution naturally  induces an involution on the set of aperiodic multisegments.  We have an involution:
$$\sharp : \mathfrak{M}^e (n) \to \mathfrak{M}^e (n),$$
defined for all $\psi \in \mathfrak{M}^e (n)$ by 
$$L_{\psi}^{\sharp}=L_{\psi^\sharp}.$$

\begin{Rem}

 We have in fact  two others  well defined involutions on $\widehat{H}_n (q)$ which are defined as follows:
\begin{itemize}
\item The Zelevinsky involution $\tau$ defined in \cite{MW} : 
 $$T_i^{\sharp}=-q T_{n-i}^{-1},\ X_j^{\sharp}=X_{n+1-j}^{-1},$$
 for $i=1,\ldots,n-1$ and $j=1,\ldots,n$. 
\item The  involution $\nabla$ : 
 $$T_i^{\nabla}=-q T_{n-i},\ X_j^{\nabla}=X_{n+1-j},$$
 for $i=1,\ldots,n-1$ and $j=1,\ldots,n$. 
\end{itemize}
We have for all $x\in \widehat{H}_n (q)$:
$$x^{\tau}=(x^{\nabla})^{\sharp}=(x^{\sharp})^{\nabla}.$$
These two involutions thus also induce involutions on the set $\mathfrak{M}^e (n)$ and they  have been studied in \cite{JL3}. 
 \end{Rem}

\subsection{Mullineux involution for Ariki-Koike algebras}
Assume that ${\bf s}\in \mathbb{Z}^l$.  Then we have a well-defined algebra automorphism:
$$\gamma:\mathcal{H}_n^{\mathfrak{s}} (q) \to \mathcal{H}_n^{\mathfrak{s}} (q^{-1}),$$
which is defined on the generators as follows:
$$T_0 \mapsto T_0^{-1},\ T_i \mapsto -qT_i^{-1}.$$
This map naturally induces  bijections  on the  indexing sets of the simple modules   of Ariki-Koike algebras.   
Let 
$\mathfrak{s}^{\sharp}$ be the orbit of $(-s_1,\ldots,-s_l)$ modulo the action of the affine symmetric group. 
Let ${\bf v} \in \mathfrak{s}^{\sharp}$ then we have a map:
$$m^{{\bf s} \to {\bf v}}_e: \Phi_{(e,{\bf s})} (n) \to \Phi_{(e,{\bf v})} (n),$$
defined as follows. Let $\ulambda \in \Phi_{(e,{\bf s})} (n)$, then there exists a unique  $\umu\in \Phi_{(e,{\bf v})} (n)$
such that 
$$(D_{{\bf s}}^{\ulambda})^\gamma \simeq D_{{\bf v}}^{\umu},$$
and we set 
$$m^{{\bf s} \to {\bf v}}_e (\ulambda)=\umu.$$
This map 
has been described in \cite{JL}.  If $l=1$ and $e$ is prime then it coincides with the usual Mullineux involution 
 of the symmetric group that we have defined in the introduction.  If $l=1$, then it corresponds to the Mullineux involution 
  of the Hecke algebra of type $A$ of \cite{Br} which will simply be denoted by $m_e$ (it does not depend on ${\bf s}$). In this paper, we will give an algorithm for computing $m_e$. 
  
  \begin{Rem}\label{core}
  If $\lambda$ is a partition and $\gamma$ a node of its Young diagram,  the $\gamma$-hook of $\lambda$ id by definition the set of  all the nodes at the right and at the bottom of $\gamma$ (including $\gamma$). The length of the hook is the number of nodes in it. We say that $\lambda$ is an $e$-core if all the hooks have length strictly less than $e$. If $\lambda$ is an $e$-core then $m_e (\lambda)$ can be easily described: it is just the conjugation of $\lambda$ (as in the semisimple case), see \cite{Mu} (when $e$ is a prime but the results generalizes easily if $e$ is an integer). 
  
  \end{Rem}

  More generally, it is a natural question to ask how one can 
 describe all the maps $m^{{\bf s} \to {\bf v}}_e $  
  in general. It turns out that by \cite[Prop. 4.2]{JL0}, knowing the map $m_e$,  one can describe it quite easily in a particular case: 
\begin{Prop}\label{vd}
Assume that ${\bf s}$ is very dominant. Let  ${\bf s}^{\sharp}:=(-s_1',\ldots,-s_l')$ be a very dominant multicharge such that 
 $s_i'\equiv s_i +e\mathbb{Z}$  for all $i=1,\ldots,l$. 
Then for all 
$\ulambda \in  \Phi_{(e,\mathfrak{s})} (n)$, we have:
$$m^{{\bf s} \to {\bf s}^{\sharp}}_e (\ulambda) =(m_e(\lambda^1),\ldots,  m_e (\lambda^l)).$$
\end{Prop}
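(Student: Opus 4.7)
The plan is to use the Dipper--Mathas Morita equivalence. In the asymptotic regime where ${\bf s}$ is very dominant, the parameters $q^{s_1},\ldots,q^{s_l}$ are sufficiently separated (as $q$-powers) that this equivalence provides an identification between the category of $\mathcal{H}_n^{\mathfrak{s}}(q)$-modules and the category of modules over the direct sum of algebras
$$\bigoplus_{n_1+\cdots+n_l=n} H_{n_1}(q)\otimes \cdots \otimes H_{n_l}(q).$$
Under this equivalence, the simple module $D^{\ulambda}_{\bf s}$ indexed by a Kleshchev $l$-partition $\ulambda=(\lambda^1,\ldots,\lambda^l)$ corresponds to the external tensor product $D^{\lambda^1}\otimes\cdots\otimes D^{\lambda^l}$ of simple modules for the finite Hecke algebras of type $A$. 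This is consistent with the fact, recalled earlier in the paper, that in the very dominant regime the Kleshchev condition reduces to componentwise $e$-regularity of each $\lambda^j$.

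Next, I would observe that the defining automorphism $\gamma:\ T_0\mapsto T_0^{-1},\ T_i\mapsto -qT_i^{-1}$ of $\mathcal{H}_n^{\mathfrak{s}}(q)$ is compatible with the Morita equivalence: on each tensor factor $H_{n_j}(q)$ it restricts to the standard involution $T_i\mapsto -qT_i^{-1}$, whose effect on simple modules is by definition the Mullineux map $m_e$. Hence
$$(D^{\ulambda}_{\bf s})^{\gamma} \simeq D^{m_e(\lambda^1)}\otimes \cdots \otimes D^{m_e(\lambda^l)}.$$
Since ${\bf s}^\sharp$ is also very dominant with components congruent to $-s_i$ modulo $e$, the Morita equivalence applies equally to $\mathcal{H}_n^{\mathfrak{s}^\sharp}(q^{-1})$, and the right-hand side above is exactly the image of $D^{(m_e(\lambda^1),\ldots,m_e(\lambda^l))}_{{\bf s}^\sharp}$ under that equivalence. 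Comparing the two sides yields the claimed equality.

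The main delicate point, which I expect to require the most care, is verifying that the cellular/crystal parametrization of simple $\mathcal{H}_n^{\mathfrak{s}}(q)$-modules by Kleshchev $l$-partitions agrees with the tensor-product parametrization coming from the Morita equivalence (in which each tensor slot is labelled by an $e$-regular partition for the finite type $A$ Hecke algebra). Once this compatibility is established --- essentially by identifying the connected component of $\uemptyset$ in the Fock space crystal, in the asymptotic setting, with an $l$-fold tensor product of basic level-one crystals --- the automorphism $\gamma$ visibly acts diagonally under the decomposition and the Mullineux involution appears on each tensor slot. The remark in the paper that any two very dominant multicharges in the same orbit give the same labelling set and identity crystal isomorphism ensures that the choice of ${\bf s}^\sharp$ within its orbit does not affect the conclusion.
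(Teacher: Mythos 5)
Your proposal rests on a premise that fails: the Dipper--Mathas Morita equivalence does not apply in this situation. That equivalence requires the parameters to be separated, i.e.\ $q^{s_i}(q^{s_j})^{-1}=q^{s_i-s_j}\notin\{1,q,\ldots,q^{n}\}$ for indices in different blocks; here $q$ is a primitive $e$-th root of unity and every $s_i$ is an integer, so every such ratio is a power of $q$ of order dividing $e$ and the separation condition fails as soon as $n\geq e$, no matter how spread out the integers $s_i$ are. ``Very dominant'' is a condition on the integer lift ${\bf s}$ and affects only the combinatorial labelling: the algebra $\mathcal{H}_n^{\mathfrak{s}}(q)$ depends only on the orbit $\mathfrak{s}$, i.e.\ on the residues $s_i+e\mathbb{Z}$, so replacing $s_l$ by $s_l+ke$ cannot turn it into a direct sum of tensor products of type $A$ Hecke algebras. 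Concretely, your assertion that in the very dominant regime the Kleshchev condition reduces to componentwise $e$-regularity is contradicted by Example \ref{exa} of the paper: for $e=3$, $n=3$ the multicharge $(0,4)$ is very dominant and $\Phi_{3,(0,4)}(3)$ has $6$ elements, whereas there are $8$ pairs of $3$-regular partitions of total rank $3$ (the pairs $((3),\emptyset)$ and $((2,1),\emptyset)$ do not occur). In particular $\mathcal{H}_3^{\mathfrak{s}}(q)$ has $6$ simple modules, not the $8$ your Morita equivalence would predict, so the identification $(D^{\ulambda}_{\bf s})^{\gamma}\simeq D^{m_e(\lambda^1)}\otimes\cdots\otimes D^{m_e(\lambda^l)}$ as an external tensor product has no meaning here.

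The proposition is nevertheless true, but note that the paper does not prove it: it is quoted from \cite[Prop.~4.2]{JL0}. The argument there is crystal-theoretic rather than module-theoretic: the map $m_e^{{\bf s}\to{\bf s}^{\sharp}}$ is characterized by sending $\uemptyset$ to $\uemptyset$ and intertwining the crystal operators ($\widetilde{f}_i$ on the source with $\widetilde{f}_{-i}$ on the target), and one verifies that in the very dominant regime the componentwise Mullineux map has this intertwining property, using the behaviour of good nodes for Kleshchev multipartitions. Your closing remark about identifying the connected component of $\uemptyset$ inside a tensor product of level-one crystals points in the right direction, but that connected component is a proper subset of the full tensor product, which is exactly why the naive componentwise/Morita picture breaks down and why a genuine crystal argument is needed.
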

As a consequence, this result, combining with the fact that we know how to compute the natural bijection between the various parametrizations of the simple modules of Ariki-Koike algebras permit to describe all the Mullineux involutions (assuming that we know $m_e$). Indeed, let ${\bf v}_1 \in \mathfrak{s}$ and let ${\bf v}_2 \in \mathfrak{s}^{\sharp}$. Let 
 ${\bf s}_1 \in \mathfrak{s}$ be a very dominant multicharge. Then we have:
 $$m^{{\bf v}_1 \to {\bf v}_2}_e=     \Psi_e^{{\bf s}_1^{\sharp}\to {\bf v}_2} \circ m^{{\bf s}_1 \to {\bf s}_1^{\sharp}} \circ  \Psi_e^{{\bf v}_1\to {\bf s}_1}$$
 where ${\bf s}_1^{\sharp}$ is as in the above proposition.

\begin{exa}
We keep the setting of example \ref{exa}. For $n=3$, the multicharge $(0,4)$ is very dominant, so the above result applies in this case. One can take ${\bf s}^{\sharp}=(0,5)$
 which is also very dominant. 
Using the fact that $m_3 (3)=(2,1)$, $m_3 (1.1)=(2)$, we obtain 
$$
\begin{array}{cccc}
m_e^{(0,4) \to (0,5)}& \Phi_{3,(0,1)} (3) & \to & \Phi_{3,(0,5)} (3)\\
 &  (\emptyset,(3)) & \mapsto &  (\emptyset,(2,1)) \\
 &  ((1),(1,1)) & \mapsto &  ((1),(2)) \\
 &  (\emptyset,(2,1)) & \mapsto &  (\emptyset,(3)) \\
 &  ((2),(1)) & \mapsto &  ((1,1),(1)) \\
 &  ((1,1),(1))& \mapsto &  ((2),(1)) \\
  &  ((1),(2)) & \mapsto &  ((1),(1,1)) \\
\end{array}$$
Now combining with our cristal isomorphism in Example \ref{exa2}, we for example obtain 
$$
\begin{array}{cccc}
m_e^{(0,1) \to (0,5)} : & \Phi_{3,(0,1)} & \to & \Phi_{3,(0,5)}\\
 &  (\emptyset,(3)) & \mapsto &  (\emptyset,(2,1)) \\
 &  ((1),(1,1)) & \mapsto &  ((1),(2)) \\
 &  ((1),(2)) & \mapsto &  (\emptyset,(3)) \\
 &  ((2),(1)) & \mapsto &  ((1,1),(1)) \\
 &  ((1,1),(1))& \mapsto &  ((2),(1)) \\
  &  ((3),\emptyset) & \mapsto &  ((1),(1,1)) \\
\end{array}$$

\end{exa}

\subsection{Relations between the involutions}\label{sub}

Now we put all the above results together to deduce relations between the various involutions we have defined. The following result is proved  in \cite{JL3}.

\begin{Th}
Let $\psi$ be an aperiodic multisegment and let ${\bf s}\in \mathcal{A}_e^l[s]$ be an admissible multicharge for $\psi$. 
 Set ${\bf s}^t=(-s_l,\ldots,-s_1)\in \mathcal{A}_e^l[-s_l]$ then we have:
$$\Psi^{\sharp}=\chi^n_{e,{\bf s}^t} \circ m_e^{{\bf s}\to {\bf s}^t} \circ  (\chi^n_{e,{\bf s}})^{-1} (\psi)$$

\end{Th}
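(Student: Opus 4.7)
The plan is to trace the Iwahori--Matsumoto involution through the identification of the simple affine Hecke algebra module $L_\psi$ with a simple Ariki--Koike module, and then recognize the $\sharp$-twist on the affine side as precisely the twist used to define $m_e^{{\bf s}\to{\bf s}^t}$.

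First I would set $\ulambda:=(\chi^n_{e,{\bf s}})^{-1}(\psi)\in\Phi_{(e,{\bf s})}(n)$, which is well-defined because ${\bf s}$ is admissible for $\psi$ and $\chi^n_{e,{\bf s}}$ is injective. By the very definition of $\chi^n_{e,{\bf s}}$ we have $D^{\ulambda}_{{\bf s}}\simeq L_\psi$ as $\widehat{H}_n(q)$-modules; twisting both sides by the Iwahori--Matsumoto involution gives $(D^{\ulambda}_{{\bf s}})^{\sharp}\simeq L_{\psi^{\sharp}}$. The proof then reduces to identifying the left-hand side with $D^{\umu}_{{\bf s}^t}$ where $\umu=m_e^{{\bf s}\to{\bf s}^t}(\ulambda)$, for once this is done, applying $\chi^n_{e,{\bf s}^t}$ and using the injectivity of the multisegment labelling of simple $\widehat{H}_n(q)$-modules yields the claimed formula.

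For the identification itself I proceed in two substeps. First, I observe that $(D^{\ulambda}_{{\bf s}})^{\sharp}$ factors through an Ariki--Koike algebra: since $\prod_{j=1}^{l}(X_1-q^{s_j})$ annihilates $D^{\ulambda}_{{\bf s}}$ and $\sharp$ sends $X_1\mapsto X_1^{-1}$, the product $\prod_{j=1}^{l}(X_1-q^{-s_j})$ annihilates the $\sharp$-twist; since ${\bf s}\in\mathcal{A}_e^l[s]$, the multicharge $(-s_1,\ldots,-s_l)$ lies in the $\widehat{\mathfrak{S}}_l$-orbit of ${\bf s}^t=(-s_l,\ldots,-s_1)\in\mathcal{A}_e^l[-s_l]$, so the twist is a simple $\mathcal{H}_n^{\mathfrak{s}^t}(q)$-module. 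Second, I would check that the $\sharp$-twist on the Ariki--Koike quotient coincides with the pullback by the algebra automorphism $\gamma$ of the previous subsection, after identifying the image of $X_1$ with the generator $T_0$ of the Ariki--Koike algebra. Once this compatibility is in hand, the very definition of $m_e^{{\bf s}\to{\bf s}^t}$ yields $(D^{\ulambda}_{{\bf s}})^{\sharp}\simeq D^{m_e^{{\bf s}\to{\bf s}^t}(\ulambda)}_{{\bf s}^t}$.

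The main obstacle is this second substep, since $\gamma$ as stated maps $\mathcal{H}_n^{\mathfrak{s}}(q)$ to $\mathcal{H}_n^{\mathfrak{s}}(q^{-1})$ whereas $\sharp$ keeps $q$ fixed and flips the multicharge to $-{\bf s}$. The two viewpoints must be reconciled, most naturally by verifying directly on the generators $T_1,\ldots,T_{n-1}$ and $X_1$ that both twists produce the same action on the underlying vector space (up to a harmless rescaling isomorphism between $\mathcal{H}_n^{\mathfrak{s}^t}(q)$ and $\mathcal{H}_n^{\mathfrak{s}^t}(q^{-1})$), or by factoring through a common intermediate involution on $\widehat{H}_n(q)$. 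Given this compatibility, concatenating the isomorphisms $L_\psi\simeq D^{\ulambda}_{{\bf s}}$, $(D^{\ulambda}_{{\bf s}})^{\sharp}\simeq D^{\umu}_{{\bf s}^t}$, and $D^{\umu}_{{\bf s}^t}\simeq L_{\chi^n_{e,{\bf s}^t}(\umu)}$ closes the argument.
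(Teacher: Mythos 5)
The first thing to say is that the paper does not actually prove this theorem: it is quoted verbatim from \cite{JL3} ("The following result is proved in \cite{JL3}"), so there is no internal proof to compare yours against. Judged on its own terms, your outline has the right architecture, and it is consistent with how such statements are established: unwind $\ulambda=(\chi^n_{e,{\bf s}})^{-1}(\psi)$, use $D^{\ulambda}_{\bf s}\simeq L_\psi$ and the definition $L_\psi^{\sharp}=L_{\psi^{\sharp}}$, and reduce everything to the single isomorphism $(D^{\ulambda}_{\bf s})^{\sharp}\simeq D^{\umu}_{{\bf s}^t}$ with $\umu=m_e^{{\bf s}\to{\bf s}^t}(\ulambda)$. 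Your first substep is also correct and cleanly argued: since $X_1^{-1}-q^{-s_j}=-q^{-s_j}X_1^{-1}(X_1-q^{s_j})$, the ideal $I_{\mathfrak{s}^{\sharp}}$ annihilates the $\sharp$-twist, so the twist does factor through the Ariki--Koike algebra attached to the orbit of $(-s_1,\ldots,-s_l)$.

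The genuine gap is that the entire content of the theorem sits in the step you defer. The identity $(D^{\ulambda}_{\bf s})^{\sharp}\simeq D^{m_e^{{\bf s}\to{\bf s}^t}(\ulambda)}_{{\bf s}^t}$ is not a formal consequence of matching generators: $\gamma$ is defined as a map $\mathcal{H}_n^{\mathfrak{s}}(q)\to\mathcal{H}_n^{\mathfrak{s}}(q^{-1})$, and the labelling of the simple modules of the target by $\Phi_{(e,{\bf v})}(n)$ for ${\bf v}\in\mathfrak{s}^{\sharp}$ is itself a nontrivial convention (it rests on the identification of $\mathcal{H}_n^{\mathfrak{s}}(q^{-1})$ with an Ariki--Koike algebra at $q$ and negated multicharge, together with the Uglov/crystal parametrization of its simples). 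Checking that $T_0\mapsto T_0^{-1}$, $T_i\mapsto -qT_i^{-1}$ agrees with $X_1\mapsto X_1^{-1}$, $T_i\mapsto -qT_i^{-1}$ is the easy half; the hard half is showing that under this dictionary the crystal labellings on both sides line up so that the multipartition attached to the twisted module is exactly $m_e^{{\bf s}\to{\bf s}^t}(\ulambda)$ and not, say, its image under some further crystal isomorphism within the orbit $\mathfrak{s}^{\sharp}$. You name this as ``the main obstacle'' and propose two plausible routes, but neither is carried out, and ``up to a harmless rescaling isomorphism'' is precisely the assertion that needs proof. So the proposal is an accurate reduction of the theorem to its essential difficulty rather than a proof of it; to close it you would need to import or reprove the compatibility statement from \cite{JL3} (or \cite{JL0}) explicitly.
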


As a consequence, the Iwahori-Mastumoto involution may be computed as follows. Take an aperiodic multisegment $\psi$.
\begin{itemize}
\item Choose an admissible multicharge ${\bf s}$ for $\psi$ and compute $\ulambda:=(\chi^n_{e,{\bf s}})^{-1} (\psi)$ using \S \ref{multis}. 
\item Compute $\nu:=m_e^{{\bf s}\to {\bf s}^t}  (\ulambda)$ using the discussion in the last section. 
\item Compute $\psi^{\sharp}:=\chi^n_{e,{\bf s}^t}  (\nu)$ using the algorithm described in \cite{JL3}. 
\end{itemize}

\begin{exa}
Take $e=3$ and  the multisegment $[0]+[0,1,2]+[1,2,3]$. One can see that $(0,1)$ is admissible for this multisegment and we have 
 $(\chi^7_{3,(0,1)})^{-1} (\psi)=((3),(3,1))$.

  We need to compute  $m_e^{{\bf s}\to {\bf s}^t} ((3),(3,1))$. To do this, we first compute $\Psi_e^{(0,1) \to (0,7)}((3),(3,1))$ 
 as $(0,7)$ is very dominant. We obtain the bipartition  $((1),(3,3))$. Now we have seen that 
 $$m^{(0,7) \to (0,8)}_e ((1),(3,3))=(m_3 (1),m_3 (3,3))=((1),(6)).$$
 Again, we compute $\Psi_e^{(0,8) \to (0,2)}((1),(6))=((1),(6))$
 and thus we get 
  $$\psi^{\sharp}:=[0]+[2,0,1,2,0,1]. $$
\end{exa}

 Now, let us explain how  one can deduce an algorithm for computing the Mullineux involution for $e$-regular partitions. 
This  is based on the following elementary remark. Let $\lambda \in \Phi_{e,(0)}$ be an 
 $e$-regular partition and consider the aperiodic multisegment $\psi:=\chi^n_{e,(0)} (\lambda)$ (recall that this is nothing but the formal sum of the segments given by the rows of the Young diagram of $\lambda$). The above theorem shows that:
 $$m_e (\lambda)= (\chi^n_{e,(0)})^{-1} (\psi^{\sharp}).$$
 So now we are reduced to compute  $ (\chi^n_{e,(0)})^{-1} (\psi^{\sharp})$. Take ${\bf s}\in \mathcal{A}_e^l[0]$ such that $l>1$ then by Proposition \ref{adm}, this is an admissible multicharge.  We have:
$$\psi^{\sharp}=\chi^n_{e,{\bf s}^t} \circ m_e^{{\bf s}\to {\bf s}^t} \circ (\chi^n_{e,{\bf s}})^{-1} (\psi)$$
Now $\umu:=(\chi^n_{e,{\bf s}})^{-1} (\psi)$ is the admissible $l$-partition and the main problem is thus to compute $m_e^{{\bf s}\to {\bf s}^t }(\umu)$.
 We have already seen that this can be done in three steps:
 \begin{enumerate}
 \item Compute the  crystal isomorphism $\Psi_e^{{\bf s} \to {\bf v}} (\umu)=(\nu^1,\ldots,\nu^l)$ where ${\bf v}$ is very dominant (recall that this means that ${\bf v}=(s_1,s_2+ke)$ 
  with $ke>n-1$)
 \item By Proposition \ref{vd},  $m_e^{{\bf v}\to {\bf v}^\sharp} (\unu)$ can be computed by applying the Mullineux map component by component. 
  As $|\nu|=|\lambda|$, if we assume that at least  two components of the $l$-partition $(\nu^1,\ldots,\nu^l)$
   are non empty, all of the components are of rank $<n$ and we know how to compute the Mullineux involution by induction. 
   \item Apply again a crystal isomorphism $\Psi_e^{{\bf v}^\sharp \to {\bf s}^t}$.
 \end{enumerate}

In the next section, we will apply the above algorithm in the case where $l=2$ and in particular show that the condition for applying our induction  in step  $2$ 
is always satisfied (except in the case where ${\bf s}=(s_1,s_2)$ and $s_1=s_2$.)

\section{Combinatorial properties}

In this section, we will try to find simple combinatorial ways to compute several objects that we have already defined: 
 this concerns the admissible multicharges and multipartitions and the crystal isomorphisms. 

\subsection{On admissible multipartitions}
If $\lambda$ and $\mu$ are two partitions, we denote by $\lambda \sqcup \nu$ the partition obtained by 
 concatenation (and reordering the parts if necessary). 
 
Assume that we have an  $e$-regular partition $\lambda=(\lambda_1,\ldots,\lambda_r)$ (that is $\lambda \in \Phi_{e,(s)} (n)$ for any $s\in \mathbb{Z}$). Let 
${\bf s} \in \mathcal{A}_e^l[s]$. By Proposition \ref{adm}, ${\bf s}$ is an admissible multicharge.  The aim of this subsection is to show that one can easily  construct the associated admissible $l$-partition $\ulambda \in \Phi_{e,{\bf s}} (n)$ such that 
   $\chi^n_{e,{\bf s}} (\ulambda)=\chi^n_{e,( s)} (\lambda)$ (recall that $\chi^n_{e,{\bf s}}$ is always injective).   To do this, one can use the algorithm developed in \cite{JL3} from the datum 
    of the multisegment  $\chi^n_{e,( s)} (\lambda)$ or we can  argue as follows.  
Let $l'\in \{1,\ldots,l\}$ be minimal such that $s_{l'}=s_l$. We construct $\ulambda$ by induction as follows.   

 If $\lambda=\emptyset$ then $\ulambda :=\uemptyset$ and we are done. Otherwise, 
set 
$${\bf s}':=\left\{ \begin{array}{cl}
  (\underbrace{s_l,\ldots,s_l}_{l-l'+2},s_2+e,\ldots, s_{l'-1}+e) & \text{ if }
l'\neq 1\\
{\bf s} & \text{ if }l'=1
\end{array}
\right.$$
Note that we have  ${\bf s}' \in  \mathcal{A}_e^l[s_l] $. 
 We denote $m:=\lambda_1+\ldots+\lambda_{e+s-s_l}$. 

By induction, we have constructed the $l$-partition $\unu \in \Phi_{(e,{\bf s}')} (n-m)$
 such that   we have 
   $$\chi^{n-m}_{e,{\bf s}'} (\unu)=\chi^{n-m}_{e,(s_l)}  (\lambda_{e+s-s_l+1},\lambda_{e+s-s_l+2},\ldots,\lambda_r)$$
  We  then define $\ulambda$ as follows
  \begin{itemize}
\item If we have $l'=1$ then $\lambda^1=(\lambda_1,\ldots,\lambda_{e}) \sqcup \nu^{l}$ and   $\lambda^j=\nu^{j-1}$ if $j\neq 1$. 
  \item Otherwise, 
   $\lambda^1=(\lambda_1,\ldots,\lambda_{e+s-s_l}) \sqcup \nu^{2+l-l'}$ and $\lambda^j=\nu^{j+1-l'}$ for $j>1$  where the indices are understood modulo $l$.
   \end{itemize}
   \begin{Prop}
      With this construction, we have $\ulambda\in \Phi_{e,{\bf s}} (n)$ and  $\chi^n_{e,{\bf s}} (\ulambda)=\chi^n_{e,( s)} (\lambda)$. 
   \end{Prop}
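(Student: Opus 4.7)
The plan is to proceed by induction on $|\lambda|$, with $\lambda=\emptyset$ trivial. In the inductive step, first verify that ${\bf s}'\in\mathcal{A}_e^l[s_l]$: its first $l-l'+2$ entries equal $s_l$, and its remaining entries $s_2+e\leq\cdots\leq s_{l'-1}+e$ (empty when $l'=2$) all lie in $(s_l,s_l+e)$ because $s_{l'-1}<s_{l'}=s_l$ and $s_l<s_1+e\leq s_2+e$. The truncation $\mu:=(\lambda_{m_p+1},\lambda_{m_p+2},\ldots)$ with $m_p:=e+s-s_l$ remains $e$-regular (removing initial parts cannot create $e$-fold repetitions) and has strictly smaller rank $n-m$, so the inductive hypothesis applied to $(\mu,{\bf s}')$ supplies $\unu\in\Phi_{(e,{\bf s}')}(n-m)$ with $\chi^{n-m}_{e,{\bf s}'}(\unu)=\chi^{n-m}_{e,(s_l)}(\mu)$.

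I would then establish the multisegment identity $\chi^n_{e,{\bf s}}(\ulambda)=\chi^n_{e,(s)}(\lambda)$ by matching segments one by one. The $m_p$ segments coming from $\lambda_1,\ldots,\lambda_{m_p}$ under charge $s$ are identical to those produced when the same values are placed in the first $m_p$ rows of $\lambda^1$ under charge $s_1=s$. For the remaining segments, which come from components of $\unu$ relocated within $\ulambda$, the key numerical point is that in every case the combined shift in (charge)$+$(position) is a multiple of $e$, preserving head residues: $\nu^1,\ldots,\nu^{l-l'+1}$ move to $\lambda^{l'},\ldots,\lambda^l$ without any shift; $\nu^{l-l'+3},\ldots,\nu^l$ move to $\lambda^2,\ldots,\lambda^{l'-1}$ with charge decreasing by $e$; and $\nu^{2+l-l'}$ is concatenated into $\lambda^1$ after row $m_p$, with charge shift $s-s_l$ and position shift $+m_p$ summing to $-e$. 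The concatenation requires no reordering because, reading the inductive identity backwards, $\nu^{2+l-l'}_1$ equals the length of some segment of $\mu$ with head residue $s_l$, i.e.\ $\nu^{2+l-l'}_1=\mu_{ke+1}=\lambda_{m_p+ke+1}\leq\lambda_{m_p}$.

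It remains to check $\ulambda\in\Phi_{(e,{\bf s})}(n)$. Condition~(3) (aperiodicity) is automatic, since $\lambda$ $e$-regular forces $\chi^n_{e,(s)}(\lambda)$ to be aperiodic, which is equivalent to condition~(3) for $\ulambda$. The chain inequalities~(1) $\lambda^j_i\geq\lambda^{j+1}_{i+s_{j+1}-s_j}$ and the wrap-around~(2) reduce to FLOTW inequalities for $\unu$ under ${\bf s}'$ in most cases: the shift $s_{j+1}-s_j$ in ${\bf s}$ matches the corresponding shift in ${\bf s}'$ (possibly up to $\pm e$), and the junction at $j=l'-1$ (where $\lambda^{l'-1}=\nu^l$ and $\lambda^{l'}=\nu^1$, with ${\bf s}'$-charges $s_{l'-1}+e$ and $s_l$) reproduces exactly the ${\bf s}'$-wrap-around of $\unu$, giving the desired shift $s_l-s_{l'-1}$.

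The main obstacle is the inequalities involving the rows $i\leq m_p$ of $\lambda^1$, where $\lambda^1_i=\lambda_i$ rather than a part of $\unu$. These take the form $\lambda_i\geq\nu^k_{i'}$ with both sides indexing cells whose segments share a prescribed head residue. They are resolved by reading the segment identity backwards: such a $\nu^k_{i'}$ then corresponds to a part of $\mu$ at a position in $\mu$ strictly greater than $i$ by a multiple of $e$, hence bounded above by $\lambda_i$ by monotonicity of $\lambda$. The wrap-around condition~(2) is treated analogously, the extra $+e$ in its shift providing the required row offset.
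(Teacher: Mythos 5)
Your proof is correct and follows essentially the same inductive strategy as the paper's: establish $\chi^n_{e,{\bf s}}(\ulambda)=\chi^n_{e,(s)}(\lambda)$ by construction, deduce condition (3) from aperiodicity of that multisegment, and reduce conditions (1)--(2) to the FLOTW conditions for $\unu$ (which the paper packages via the rotated multipartition $\umu\in\Phi_{e,{\bf v}}(n-m)$, while you do the index bookkeeping directly). You supply more detail than the paper at several points, e.g.\ verifying ${\bf s}'\in\mathcal{A}_e^l[s_l]$, the $e$-regularity of the truncation, and that the concatenation into $\lambda^1$ needs no reordering.
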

   \begin{proof}
We prove the proposition by induction. The result is trivial when $n=0$. Keeping the above notations, one can assume that  $\unu \in \Phi_{(e,{\bf s}')} (n-m)$. 
 First one can perform exactly the same procedure as in \S \ref{multis} for the description of the map  $\chi^n_{e,{\bf s}}$
  to associate to $\ulambda$ a multisegment (even if we have - not already - proved that $\ulambda$ is in $\Phi_{e,{\bf s}} (n)$). By construction, this multisegment is nothing but 
 $\chi^n_{e,(s)} (\lambda)$. It is thus an aperiodic multisegment. This proves  condition $3$ of FLOTW $l$-partition for $\ulambda$ (see the definition in \S \ref{act}).  
Hence, we just need to show that the $l$-partition satisfies  the two first points. 
\begin{itemize}
\item If $l'=1$, by induction, we have $\nu^j\geq \nu^{j+1}$ for all $j=1,\ldots,l-1$. This implies that $\lambda_i^j\geq \lambda_i^{j+1}$ for all $j=2,\ldots,l-1$
 and that $\lambda^l_i\geq \lambda^1_{i+e}$ for all $i\geq 1$ and we get that $\lambda^1_i\geq \lambda^2_i$ because $(\lambda_1,\ldots,\lambda_{e})$ are the greatest parts of $\lambda$ and because $\nu^l_i\geq \nu^1_{i+e}$ for all $i>0$. 
\item If $l'\neq 1$,     by the property of FLOTW $l$-partitions, we have that $\umu:=(\nu^{l-l'+3},\ldots,\nu^l,\nu^1,\ldots,\nu^{l-l'+1},\nu^{l-l'+2})$ is in 
      $\Phi_{e,{\bf v}} (n-m)$ for ${\bf v}=(s_2,\ldots,s_{l'-1},s_l,\ldots,s_l,s_l)$ and we can thus conclude using the fact that 
       $\lambda^1_j=\lambda_j$ if $j=1,\ldots, e+s-s_l$ and $\lambda^1_j=\mu^l_{j-(e+s-s_l)}$ otherwise.

\end{itemize}

   \end{proof}
   
In the case where $l=2$ (which is the case that we will mostly studied in the forthcoming sections), the multipartition $\ulambda=(\lambda^1,\lambda^2)$ is easy to obtain. One can assume that $s_1=0$, then  we have 
$$\lambda^1=(\lambda_1,\ldots,\lambda_{e-s_2},\lambda_{2e-s_2+1},\ldots,\lambda_{3e-s_2},\ldots,\lambda_{2ke-s_2+1},\ldots,\lambda_{3ke-s_2},\ldots)$$
 and 
 $$\lambda^2=(\lambda_{e-s_2+1}\ldots,\lambda_{2e-s_2},\lambda_{3e-s_2+1},\ldots,\lambda_{4e-s_2+1},\ldots\,\lambda_{3ke-s_2+1},\ldots,\lambda_{4ke-s_2},\ldots)$$
 \begin{exa}
 Let us take $e=4$, $\lambda=(8,8,6,6,4,3,3,2,1,1)$, then the associated Young tableau (with the residues of each node marked in the associated box)  is:
 
   $$
\begin{array}{|c|c|c|c|c|c|c|c|}
  \hline
  0&1  &2 & 3 &  0&1&2&3  \\
  \cline{1-8}
  3&0 &1 & 2 &  3&0&1&2 \\
  \cline{1-8}
2 &  3&0&1&2& 3 \\
      \cline{1-6}
 1 & 2 &  3&0&1&2 \\     
       \cline{1-6}
 0&1&2& 3 \\
  \cline{1-4} 
  3&0&1\\
  \cline{1-3}   
    2&3&0\\
      \cline{1-3}   
      1 & 2   \\
  \cline{1-2}
      0 \\
    \cline{1-1}   
    1  \\
        \cline{1-1} 
\end{array}$$
Take ${\bf s}=(0,2,2)$. Following the algorithm, we first have $l'=2$. Then ${\bf s}'=(2,2,2)$. We have $m=\lambda_1+\lambda_2$
 and we need to compute $\unu$ such that  
     $$\chi^{n-m}_{4,(2,2,2)} (\unu)=\chi^{n-m}_{4,(2)}  (6,6,4,3,3,2,1,1)$$
We obtain $\unu=((6,6,4,3),(3,2,1,1),\emptyset)$ and we have $\ulambda=((8,8),(6,6,4,3),(3,2,1,1))$.

In the case where $l=2$, we have:
\begin{itemize}
\item If ${\bf s}=(0,0)$, we have $\ulambda=((8,8,6,6,1,1),(4,3,3,2))$.

\item If ${\bf s}=(0,1)$, we have $\ulambda=(8,8,6,2,1,1),(6,4,3,3))$.

\item If ${\bf s}=(0,2)$, we have $\ulambda=(8,8,3,2,1,1),(6,6,4,3))$.

\item If ${\bf s}=(0,3)$, we have $\ulambda=((8,3,3,2,1),(8,6,6,4,1))$.

\end{itemize}

 \end{exa}
 
 Using this, we have thus constructed a map 
 $$\theta^n_{e,{\bf s}} : \Phi_{e,(0)} (n)\to \Phi_{e,{\bf s}} (n)$$
 which associates to $\lambda$ the $l$-partition $\ulambda$ constructed above (we will sometimes omit the subscript $n$).

\subsection{Crystal isomorphisms}\label{crysi}

In this second subsection, we study in details  the crystal isomorphisms 
 restricted to the multipartitions in  the image of $\theta_{e,{\bf s}}$.
in the case where $l=2$.  The first aim is to implify the procedure to compute it, the second is to  show certain crucial properties which will show that our algorithm run.

Let $\lambda$ be an $e$-regular partition and assume that ${\bf s}=(0,s)$. We also assume that $\lambda$ is non empty and that $r$ is maximal such that 
 $\lambda_r \neq 0$. 
 Let $(\lambda^1,\lambda^2):=\theta_{(e,(0,s))} (\lambda)$ and consider the associated symbol with length $te$ with $t$ sufficiently large. 
It is thus of the following form :
$$\left(
\begin{array}{cccccccccccc}
\alpha_{te} & \ldots & \alpha_{(t-1)e+1} & \ldots&\alpha_{2e} & \ldots &  \alpha_{e+1}&  \alpha_e &\ldots&  \alpha_{s+1}&\ldots&\alpha_1 \\
 \beta_{te-s} & \ldots & \alpha_{(t-1)e-s+1}   &\ldots&\beta_{2e-s} &   \ldots & \beta_{e-s+1} &  \beta_{e-s}&\ldots &        \beta_1 &
\end{array}
\right)$$
By definition of the symbol, we here have $\alpha_j:=\lambda^2_j-j+s$ for $j=1,\ldots,ke$ and $\beta_j:=\lambda^2_j-j$  for $j=1,\ldots,ke-s$. We denote $(\mu^1,\mu^2):=\Psi^{(0,s)\to (0,s+k.e)}_e (\lambda^1,\lambda^2)$ (so that, as usual, $ke>n-1$ and thus so that the multicharge $(0,s+ke)$ is very dominant)

\noindent \underline{Assume that $\lambda \neq \emptyset$ and that $\mu^2=\emptyset$} then  the algorithm for the computation of 
$ \Psi^{(0,s)\to (0,s+k.e)}_e$ easily shows that  that this can happen if and only if $ \Psi^{(0,s)\to (0,s+k.e)}_e$ is the identity. 
 This  thus  implies that 
$$\{\beta_i\ |\ i=1,\ldots,ke-s\} \subset \{\alpha_i\ |\ i=1,\ldots,ke\}$$
 In this case, we also need to have 
$r\leq e-s$.
Now we have for all  $i=1,\ldots,ke$, $\alpha_i=-i+s$ and  also  $\beta_{j}\leq \alpha_j$ for all $j=1,\ldots,ke-s$. As a consequence, we have
$$\lambda^2_1-1\leq -1+s$$
and thus $\lambda^2_2\leq s$. We conclude 
\begin{Prop}\label{diff0}
Under the above notations, assume that $\mu^2=\emptyset$ then $\lambda=\lambda^1$ is an $e$-core. 
\end{Prop}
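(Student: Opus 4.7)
The plan is to complete the argument begun in the passage preceding the statement: I will deduce $\lambda^2=\emptyset$ (so $\lambda=\lambda^1$), that $\lambda$ has at most $e-s$ parts and that each part is at most $s$, and then observe that any such partition automatically has all hook lengths strictly less than $e$, which is equivalent to being an $e$-core.

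First I would make rigorous the claim that $\mu^2=\emptyset$ (with $\lambda\neq\emptyset$) forces $\Psi_e^{(0,s)\to(0,s+ke)}$ to be the identity on $(\lambda^1,\lambda^2)$. Using the description of this map in Example~\ref{exacomp}, the $(0,s+ke)$-symbol of $(\mu^1,\mu^2)$ has top row $\{0,1,\ldots,e-1\}\cup\{x_d+e,\ldots,x_1+e\}$, where the $x_i$'s form the bottom row $\widetilde{L}_1$ of the folded symbol. The hypothesis $\mu^2=\emptyset$ forces this top row to equal the trivial top row for $\emptyset$ at charge $s+ke$, namely $\{s+ke-(d+e),\ldots,s+ke-1\}$. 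Comparing these two sets of consecutive integers pins down $\widetilde{L}_1=\{0,1,\ldots,d-1\}$, and unwinding the folding algorithm then shows that the only way to achieve this is $\widetilde{L}_2=L_2$ and $\widetilde{L}_1=L_1$, i.e., $\Psi$ is the identity; in particular, $\lambda^2=\mu^2=\emptyset$, giving $\lambda=\lambda^1$.

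Second, the identity $\Psi=\mathrm{id}$ implies directly the containment $\{\beta_j\}\subset\{\alpha_i\}$ and the inequalities $\beta_j\leq\alpha_j$ (both already noted in the preamble). From $\lambda^2=\emptyset$ we have $\alpha_i=-i+s$ for all $i$, and combined with $\beta_j=\lambda_j^1-j$ this gives $\lambda_j^1-j\leq s-j$, i.e.\ $\lambda_j^1\leq s$ for every $j\geq 1$. Moreover $\lambda^2=\emptyset$ forces, through the construction of $\theta$, the partition $\lambda=\lambda^1$ to have at most $e-s$ nonzero parts. With $\lambda$ having at most $e-s$ parts each of size at most $s$, the largest hook length of $\lambda$ is the $(1,1)$-hook of length $\lambda_1+r'-1$ where $r'$ is the number of parts; this is bounded above by $s+(e-s)-1=e-1<e$. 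Hence no hook of $\lambda$ has length divisible by $e$, so $\lambda$ is an $e$-core.

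The main obstacle will be the first step, namely the rigorous verification of ``$\mu^2=\emptyset\Rightarrow\Psi=\mathrm{id}$'' from the symbol description of Example~\ref{exacomp}. The rigidity imposed by requiring $\widetilde{L}_1=\{0,1,\ldots,d-1\}$ constrains the greedy pairing $(x_i,y_i)$ tightly enough to force both the non-triggering of the ``otherwise'' clause in the algorithm and the preservation of the original $L_2$ and $L_1$; tracking this carefully through the cases of the pairing rule, although elementary, is the main bookkeeping involved.
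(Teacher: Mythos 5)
Your proposal is correct and follows the paper's own argument essentially step for step: the paper likewise asserts that $\mu^2=\emptyset$ forces $\Psi_e^{(0,s)\to(0,s+ke)}$ to be the identity (hence $\lambda=\lambda^1$ with at most $e-s$ nonzero rows and, via $\beta_j\leq\alpha_j=-j+s$, at most $s$ columns), and concludes that every hook has length at most $(e-s)+s-1=e-1$, so $\lambda$ is an $e$-core. The only detail that is off in your sketch of the ``identity'' step --- the forced set $\widetilde{L}_1$ is $\{0,\ldots,s-1\}$ together with a block of $d-s$ negative entries rather than $\{0,1,\ldots,d-1\}$, and the one-step symbol formula of Example~\ref{exacomp} should be iterated rather than applied in one shot at charge $s+ke$ --- lies in the portion of the argument the paper itself dismisses as routine and does not affect the conclusion.
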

\begin{proof}
The above discussion shows that $\lambda$ has at most $e-s$ non empty rows and at most $s$ columns. This implies that the hooks of 
 $\lambda$ has at most length $e-1$ and thus that $\lambda$ is an $e$-core. 

\end{proof}
\noindent \underline{Now let us see what we can say if $\mu^1=\emptyset$.} Before this, we  show below that the image of $\ulambda$ under a crystal isomorphism can be 
quite easily computed in the case where $\ulambda$ is in the image of $\theta_{e,{\bf s}}$ which is the case we are interested in here. 

Keeping, the above notations, 
for all $i=1,\ldots,k-1$, we have $\alpha_{ie}=\lambda_{2ie-s}-ie+s$ and $\beta_{ie-s+1}=\lambda_{2ie-s+1}-(ie-s+1)$.
So we have $\alpha_{ie}+ie-s\geq \beta_{ie-s+1}+ie-s+1$  and thus $\alpha_{ie}> \beta_{ie-s+1}$. 

In addition $\alpha_{ie+1}=\lambda_{2ie+1-s}-(ie+1)+s$ and  $\beta_{(i+1)e-s}=\lambda_{2ie-s}-(( i+1)e-s)$. So we have 
$\beta_{(i+1)e-s}+((i+1)e-s) \geq \alpha_{ie+1}+(ie+1)-s$. So $\beta_{(i+1)e-s}+e>\alpha_{ie+1}$.

These calculations show that one can perform our crystal isomorphism step by steps in the ``blocks'' of the symbol separated by vertical lines below. First recall in Example \ref{exacomp} how the crystal isomorphisms $ \Psi^{(0,s')\to (0,s'+e)}_e$ can be described. 

$$\left(
\begin{array}{ccc|c|ccc|ccccc}
\alpha_{ke} & \ldots & \alpha_{(k-1)e+1} & \ldots&\alpha_{2e} & \ldots &  \alpha_{e+1}&  \alpha_e &\ldots&  \alpha_{s+1}&\ldots&\alpha_1 \\
 \beta_{ke-s} & \ldots & \beta_{(k-1)e-s+1}   &\ldots&\beta_{2e-s} &   \ldots & \beta_{e-s+1} &  \beta_{e-s}&\ldots &        \beta_1 &
\end{array}
\right)$$
We see that all the calculations in the blocks are trivial except in the rightmost.   
After one step of the crystal isomorphism we get

$$\left(
\begin{array}{ccc|c|ccc|cccccccc}
 0& \ldots & e-1 & \ldots& \beta_{3e-s} +e & \ldots &  \beta_{2e-s+1}+e&  \beta_{2e-s}+e &\ldots& \beta_{e+1}  & \ldots &\beta_{e-s+1}+e&\ldots&\alpha_1' \\
\alpha_{ke} & \ldots & \alpha_{(k-1)e+1}     &\ldots&\alpha_{2e}&   \ldots & \alpha_{e+1} &  \beta_{e-s}'&\ldots &        \beta_1' &
\end{array}
\right)$$

and we see that the properties above are always satisfy. In particular, with the notations above, we have. 
$$\beta_{e-s}'+e> \beta_{e-s+1}+e$$
Now, take the 
right end of our first symbol:
$$\left(
\begin{array}{cccccc}
\alpha_{e} & \ldots & \alpha_{s+1} & \alpha_s & \ldots  & \alpha_1  \\
 \beta_{e-s} & \ldots & \beta_{1}  
\end{array}
\right)$$
We already know that  $\beta_{e-s}+e> \alpha_1$.  Assume that we have $\lambda^1_j\neq 0$ so that  $\beta_j>-j$. Then we claim that this implies that  we have $\beta_j\geq \alpha_{s+j-1}$.  To do this, note that we have:
  $$\beta_j\geq \beta_{j-1}+1 \geq \ldots \geq  \beta_{e-s}+(e-s-j)>\alpha_1-s-j.$$
Now we have $\alpha_1\geq  \alpha_2+1\geq \ldots \geq \alpha_{s+j-1}+(s+j-2)$. So
$$\beta_j>\alpha_{s+j-1}-2$$
The only problem may appear if $\beta_j=\alpha_{s+j-1}-1$ and this implies that all the inequalities above are in fact equalities.
 We thus have:
$$\beta_j= \beta_{j-1}+1= \ldots =  \beta_{e-s}+(e-s-j),$$
and 
 $$\alpha_1=  \alpha_2+1\geq \ldots = \alpha_{s+j-1}+(s+j-2)=\beta_j+s-j-1=\beta_{j-1}+s-j=\ldots=\beta_{e-s}+e-1.$$
 This case implies that we have   an $e$-period  in the sense of \cite[Def. 2.2]{JL2}. Such property is impossible for Uglov $l$-partitions by \cite[Prop. 5.1]{JL2}. 
 
 This discussion implies that, under the notations above, if we have $\beta_j>-j$
  then 
  we must have $\beta_j ' >-j$ so that the associated part of the partition is also non zero. By a direct induction, we thus deduce:
  
\begin{Prop}\label{diff1}
Let $0<s<e$ and let $\lambda$ be an $e$-regular partition and $(\lambda^1,\lambda^2):=\theta_{(e,(0,s))} (\lambda)$.
Assume that $(\mu^1,\mu^2):=\Psi^{(0,s)\to (0,s+k.e)}_e (\lambda^1,\lambda^2)$ for $k>>0$ (so that $(0,s+k.e)$ is very dominant, see \S \ref{act}). Then 
$|\mu^1|\neq 0$. 

\end{Prop}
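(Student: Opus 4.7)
My plan is to formalize the induction argument sketched in the paragraph preceding the statement. The block decomposition of the $(0,s)$-symbol computed just before the proposition shows that the crystal isomorphism $\Psi^{(0,s)\to(0,s+ke)}_e$ factors as a composition of $k$ elementary crystal isomorphisms $\Psi^{(0,s')\to(0,s'+e)}_e$, and at each step only the rightmost length-$e$ block of the symbol is touched in a non-trivial way (the inequalities $\alpha_{ie} > \beta_{ie-s+1}$ and $\beta_{(i+1)e-s}+e > \alpha_{ie+1}$ derived above guarantee that entries in the outer blocks are simply shifted). I therefore reduce to the single-step statement: if the bottom row of the rightmost block contains an entry $\beta_j > -j$ (equivalently $\lambda^1_j \neq 0$), then after one elementary step the corresponding bottom-row entry $\beta_j'$ still satisfies $\beta_j' > -j$.

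The base case of the induction is immediate: since $\lambda$ is nonempty and $0 < s < e$, the explicit formula for $\theta_{e,(0,s)}$ places $\lambda_1$ into the first row of $\lambda^1$, so $\lambda^1 \neq \emptyset$ and some $\beta_j > -j$ is present. The single-step statement is essentially what the paragraph preceding the proposition proves: telescoping $\beta_{e-s}+e > \alpha_1$ with the monotonicity $\beta_j \geq \beta_{j-1}+1$ and $\alpha_1 \geq \alpha_2 + 1 \geq \cdots$ yields $\beta_j \geq \alpha_{s+j-1}$, and this forces $\beta_j$ to be paired in the matching procedure of Example~\ref{exacomp} with an $\alpha$-entry sitting to its left rather than wrapping around; after the shift by $e$, the new entry $\beta_j'$ inherits the strict lower bound $\beta_j' > -j$.

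The main obstacle, and the only non-routine point in the argument, is the borderline case $\beta_j = \alpha_{s+j-1} - 1$. Tracing equalities through the telescoping chain shows that in this case every intermediate inequality would have to be an equality, which forces the symbol to display an $e$-period in the sense of \cite[Def.~2.2]{JL2}. The key input, already invoked in the discussion above, is \cite[Prop.~5.1]{JL2}, which asserts that Uglov (equivalently FLOTW) bipartitions never admit such an $e$-period; this contradiction rules out the borderline case. Once the single-step claim is established, induction on $k$ immediately gives $|\mu^1| \neq 0$ as required.
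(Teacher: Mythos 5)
Your proof is correct and follows essentially the same route as the paper: the paper's own justification of Proposition~\ref{diff1} is precisely this block decomposition of the symbol, the single-step telescoping bound $\beta_j\geq\alpha_{s+j-1}$ with the borderline case $\beta_j=\alpha_{s+j-1}-1$ excluded via the non-existence of $e$-periods in Uglov bipartitions (\cite[Prop.~5.1]{JL2}), followed by induction over the elementary isomorphisms $\Psi_e^{(0,s')\to(0,s'+e)}$. Nothing further is needed.
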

\begin{Rem}
In the case where $s=0$, the above discussion also  shows that if $(\lambda^1,\lambda^2) :=\theta_{(e,(0,0))} (\lambda)$ then 
$\Psi^{(0,s)\to (0,k.e)}_e  ( \lambda^1,\lambda^2) = (\emptyset,\lambda)$ for $k>>0$. As a consequence, this choice of multicharge  cannot be used to get our recursive algorithm to compute the Mullineux involution because then it would require the computation of $m_e (\lambda)$ ... to compute $m_e (\lambda)$. 
\end{Rem}

\section{The algorithm}\label{stepa}
Let $\lambda=(\lambda_1,\ldots,\lambda_r)$ be an $e$-regular partition of rank $n$. We can now present a recursive algorithm for computing $m_e (\lambda)$.   First by Remark \ref{core}, one can assume that $\lambda$ is not an $e$-core. 
The algorithm now  consists in the following steps:
\begin{enumerate}
\item Choose $0<s<e$ and consider the bipartition $(\lambda^1,\lambda^2):=\theta_{(e,(0,s))} (\ulambda)$.
\item Compute  $(\mu^1,\mu^2):=\Psi^{(0,s)\to (0,s+k.e)}_e (\lambda^1,\lambda^2)$ for $k>>0$.  By Propositions \ref{diff0} and \ref{diff1}, we now that 
$|\mu^1|<n$ and $|\mu^2|<n$.  

\item By induction, we know $m_e (\mu^1)$ and $m_e (\mu^2)$ and we can thus compute:
$$(\kappa^1,\kappa^2):=\Psi_e^{(0,-s+ke) \to  ( (0,e-s)} (m_e (\mu^1),m_e(\mu^2)).$$
\item  We have $m_e (\lambda)=\theta_{(e,(0,e-s))}^{-1}(\kappa^1,\kappa^2)$. 
\end{enumerate}
Note that in principle, one can choose an arbitrary  multicharge ${\bf s}$ instead of $(0,s)$ (as soon as   the second point at the end of subsection \ref{sub} is satisfied) 
but the complexity 
 of the algorithm for the computation of the crystal isomorphism from ${\bf s}$ to a very dominant multicharge increases. However, It is not unreasonable to expect that some particular multicharge 
  can lead to interesting fast new algorithms. 
 
\subsection{Steps $1$ and $2$}\label{12}
It follows from Section \ref{crysi} that the first two steps can be both implemented by the process below. 
Let $0<s<e$ and set ${\bf s}=(0,s)$. We set ${\ulambda}[1]=(\lambda_1,\ldots,\lambda_{e-s})$ 
 and ${\ulambda}[2]=(\lambda_{e-s+1},\ldots,\lambda_{r})$, we write the Young tableau of $\ulambda[1]$ with the associated contents
  and just below, the Young tableau of $\ulambda[2]$ with the associated contents  with respect to the multicharge $(0,s)$. 
   $$
\begin{array}{|c|c|c|c|c|c|c|}
 \cline{1-1}
\ulambda[1]  \\
  \hline
  0&1  &2 & 3 &  \ldots  &\ldots  &  \lambda_1-1\\
   \hline
  \overline{1}&0 &1 & 2 & \ldots  & \lambda_2-2 \\
  \cline{1-6}   
    \vdots & \vdots & \vdots & \vdots &  \vdots  \\
        \cline{1-5} 
    \overline{e-s-1} & \ldots & \ldots & \lambda_{e-s}-(e-s)\\
        \cline{1-4} 
\\ 
\ulambda[2]  \\
        \cline{1-4} 
  s&s+1   & \ldots &   \lambda_{e-s+1}-1+s\\
  \cline{1-4}
  s-1&\ldots   & \lambda_{e-s+2}-2+s   \\
  \cline{1-3}   
\ldots & \ldots 
\end{array}$$
For example, take $\lambda=(10,8,7,5,4,4,3,2,1,1)$. Take $e=4$ and $s=1$ 
   $$
\begin{array}{|c|c|c|c|c|c|c|c|c|c|}
 \cline{1-1}
\ulambda[1]  \\
  \hline
  0&1  &2 & 3 &  4&5&{\bf 6}&{\bf 7}&{\bf 8}&{\bf 9}  \\
  \cline{1-10}
  \overline{1}&0 &1 & 2 &  3&{\bf 4}&{\bf 5}&{\bf 6} \\
  \cline{1-8}
 \overline{2} &   \overline{1}&0&1&2& {\bf 3}&{\bf 4} \\
  \cline{1-7}
\\ 
\ulambda[2]  \\
      \cline{1-5}
 1 & 2 &  3&4&5 \\     
       \cline{1-5}
 0&1&2& 3 \\
  \cline{1-4} 
  \overline{1}&0&1 & 2\\
\cline{1-4}
    \overline{2}&\overline{1}&0\\
  \cdashline{1-3}
      \overline{3} & \overline{1}   \\
  \cline{1-2}
      \overline{4}\\
    \cline{1-1}   
    \overline{5} \\
        \cline{1-1} 
\end{array}$$

Now, starting with the first part of $\ulambda[1]$,  consider the content of the rightmost box, say $c$. 
 In $\ulambda[2]$, we consider the rightmost boxes and we take the one  with the  greatest content   which is less than $c$, say $c'$. 
Then we remove the boxes of the first part of $\ulambda[1]$  
 with content greater than $c'$ into this part in $\ulambda[2]$ (in other words, we move the ``truncated first  row'' containing the boxes grater than $c$ to the row in $\ulambda[2]$).

  It is clear that we still have a partition.  
 Then,  we do the same for the second part of $\ulambda[1]$ and so on until we reach the last part of $\ulambda[1]$. If this is not possible we switch to the second part  of $\ulambda[1]$,  
   and we continue this process  until we reach the last part of $\ulambda[1]$. 
   
   In our example,  we must remove the boxes in bold in the first partition above, and add the boxes in bold in the second partition below. 
   $$
\begin{array}{|c|c|c|c|c|c|c|c|c|c|}
 \cline{1-1}
\ulambda[1]  \\
\cline{1-6}
  0&1  &2 & 3 &  4&5  \\
  \cline{1-6}
  \overline{1}&0 &1 & 2 &  3\\
  \cline{1-5}
 \overline{2} &   \overline{1}&0&1&2 \\
  \cline{1-5}
\\ 
\ulambda[2]  \\
      \cline{1-9}
 1 & 2 &  3&4&5& {\bf 6}&{\bf 7}&{\bf 8}&{\bf 9} \\     
       \cline{1-9}
 0&1&2& 3 &{\bf 4}&{\bf 5}&{\bf 6}  \\
  \cline{1-7} 
  \overline{1}&0&1 & 2& {\bf 3}&{\bf 4}\\
  \cline{1-6}   
    \overline{2}&\overline{1}&0\\
  \cdashline{1-3}
      \overline{3} & \overline{1}   \\
  \cline{1-2}
      \overline{4}\\
    \cline{1-1}   
    \overline{5} \\
        \cline{1-1} 
\end{array}$$

We then collect all the parts of $\ulambda[2]$ that are above the smallest part we have modified, 
 in a partition $\mu$. So here $\mu=(9,7,6)$.   The new partition $\ulambda[2]$ is given by the remaining parts and we add $e$ to the contents of all the boxes in it. 
  We then move the step above and continue the process until we cannot do anything.   The remaining parts of $\ulambda[2]$ are added to $\mu$. Then 
   the partition $\ulambda[1]$ is the first component of  $\Psi^{(0,s)\to (0,s+k.e)}_e (\lambda^1,\lambda^2)$ and $\mu$ is the second. 

   $$
\begin{array}{|c|c|c|c|c|c|c|}
 \cline{1-1}
\ulambda[1]  \\
\cline{1-6}
  0&1  &2 & 3 &  4&{\bf 5}  \\
  \cline{1-6}
  \overline{1}&0 &1 & 2 &  {\bf 3}\\
  \cline{1-5}
 \overline{2} &   \overline{1}&0&{\bf 1}&{\bf 2} \\
  \cline{1-5}
\\ 
\ulambda[2]  \\
  \cline{1-3}   
  2&3&4\\
      \cline{1-3}   
     1 & 2   \\
  \cline{1-2}
    0\\
    \cline{1-1}   
    \overline{1} \\
        \cline{1-1} 
\end{array}$$
It becomes :

   $$
\begin{array}{|c|c|c|c|c|c|}
 \cline{1-1}
\ulambda[1]  \\
\cline{1-5}
  0&1  &2 & 3 &  4 \\
  \cline{1-5}
  \overline{1}&0 &1 & 2 \\
  \cline{1-4}
 \overline{2} &   \overline{1}&0\\
  \cline{1-3}
\\ 
\ulambda[2]  \\
  \cline{1-4}   
  2&3&4& {\bf 5}\\
      \cline{1-4}   
     1 & 2  &  {\bf 3} \\
  \cline{1-3}
    0 & {\bf 1}&{\bf 2} \\
    \cline{1-3}   
    \overline{1} \\
        \cline{1-1} 
\end{array}$$
We have  now $\mu=(9,7,6,4,3,3)$, and we the above process:

   $$
\begin{array}{|c|c|c|c|c|c|}
 \cline{1-1}
\ulambda[1]  \\
\cline{1-5}
  0&1  &2 & 3 &  {\bf 4} \\
  \cline{1-5}
  \overline{1}&0 &1 & {\bf 2} \\
  \cline{1-4}
 \overline{2} &   \overline{1}&0\\
  \cline{1-3}
\\ 
\ulambda[2]  \\
    \cline{1-1}   
3 \\
        \cline{1-1} 
\end{array}$$
gives :
   $$
\begin{array}{|c|c|c|c|c|c|}
 \cline{1-1}
\ulambda[1]  \\
\cline{1-4}
  0&1  &2 & 3  \\
  \cline{1-4}
  \overline{1}&0 &1  \\
  \cline{1-3}
 \overline{2} &   \overline{1}&0\\
  \cline{1-3}
\\ 
\ulambda[2]  \\
    \cline{1-2}   
3 & {\bf 4} \\
        \cline{1-2} 
        {\bf 2} \\
       \cline{1-1}       
\end{array}$$
and then $\mu=(9,7,6,4,3,3,2,1)$. There is nothing we can do now. the bipartition we are searching for is $((4,3,3),((9,7,6,4,3,3,2,1))$

\subsection{Step $3$ and $4$}\label{34}

At this stage, we have computed $(\mu^1,\mu^2):=\Psi^{(0,s)\to (0,s+k.e)}_e (\lambda^1,\lambda^2)$ . By induction, we thus know
 $(\nu^1,\nu^2):=(m_e (\mu^1),m_e(\mu^2))$ and we must do the reversed process as the one above
  to get our bipartition:
  $$\Psi_e^{(0,-s+ke) \to  ( (0,e-s)} (\nu^1,\nu^2).$$  
  This is 
 done as follows.  
 
 We write the Young tableau of  $\nu^1$ with the associated contents for each box,
  and just below, the Young tableau of $\nu^2$ with the associated contents charged by $ke-s$ where $k$ is sufficiently large (that is, the content of the box $(a,b)$ is $b-a+(ke-s)$). Keeping the above example, we have by induction 
  $m_4 (4,3,3)=(10)$ and $m_4 (9,7,6,4,3,3,2,1)=(14,7,7,3,3,1)$. So we consider the bipartition  $((10),(14,7,7,3,3,1)$ and the multicharge is $(0,3)$. 
   $$
\begin{array}{|c|c|c|c|c|c|c|c|c|c|c|c|c|c|}
 \cline{1-1}
\nu^1  \\
\cline{1-10}
0 & 1 &2&3&4&5&6&7&8&9\\
\cline{1-10}
\\ 
\nu^2  \\
      \cline{1-14}
 19 & 20 &  21&22&23& 24&25&26&27 & 28 & 29 & 30 & 31& 32\\     
       \cline{1-14}
 18&19&20& 21 &22&23&24  \\
  \cline{1-7} 
 17&18&19 & 20& 21&22& 23\\
  \cline{1-7}   
16&17&18\\
      \cline{1-3}   
15& 16 & 17  \\
  \cline{1-3}
14  \\
        \cline{1-1} 
\end{array}$$

At each step, starting from the bottom of $\nu^2$, we see if one  can remove boxes from $\nu^2$ to add it to $\nu^1$ as in the subsection above (except that we remove the box from the other partition).  Note that  $\nu^1$ need to always have  the same number of rows so we only add the possible boxes in the $e-s$ rows of $\nu^1$.  
Then we remove $e$ from all the contents of the boxes of $\nu^2$.  
 In the example, 
we have nothing to do so we remove $e$ from all the contents of the second partitions and again one more time. 

   $$
\begin{array}{|c|c|c|c|c|c|c|c|c|c|c|c|c|c|}
 \cline{1-1}
\nu^1  \\
\cline{1-10}
0 & 1 &2&3&4&5&6&7&8&9\\
\cline{1-10}
\\ 
\nu^2\\
      \cline{1-14}
15&16& 17 &18&19&20 &21&22&23 & 24 & 25 & 26 & 27& 28\\     
       \cline{1-14}
14&15&16& 17 &18&19&20  \\
  \cline{1-7} 
 13&14&15 & 16& 17&18& 19\\
  \cline{1-7}   
12&13&14\\
      \cline{1-3}   
11& 12 & 13  \\
  \cline{1-3}
10  \\
        \cline{1-1} 
\end{array}$$

Then we can add a box of content $10$  and we subsract $e$ from all the contents.  We  then successively obtain the following bipartitions. 
   $$
\begin{array}{|c|c|c|c|c|c|c|c|c|c|c|c|c|c|}
 \cline{1-1}
\nu^1 \\
\cline{1-11}
0 & 1 &2&3&4&5&6&7&8&9& {\bf 10}\\
\cline{1-11}
\\ 
\nu^2 \\
      \cline{1-14}
15&16& 17 &18&19&20 &21&22&23 & 24 & 25 & 26 & 27& 28\\     
       \cline{1-14}
14&15&16& 17 &18&19&20  \\
  \cline{1-7} 
 13&14&15 & 16& 17&18& 19\\
  \cline{1-7}   
12&13&14\\
      \cline{1-3}   
11& 12 & 13  \\
  \cline{1-3}
\end{array}$$
and then:
   $$
\begin{array}{|c|c|c|c|c|c|c|c|c|c|c|c|c|c|c|c|}
 \cline{1-1}
\nu^1\\
\cline{1-12}
0 & 1 &2&3&4&5&6&7&8&9& {\bf 10} & {\bf 11}\\
\cline{1-12}
\\ 
\nu^2  \\
      \cline{1-14}
3& 4& 5&6& 7&8 &9&{10}& 11 & 12 & 13 & 14 & 15& 16\\     
       \cline{1-14}
2&3 & 4& 5&6& 7&8  \\
  \cline{1-7} 
 1&2&3 & 4& 5&6\\
  \cline{1-6}   
0&1&2\\
      \cline{1-3}   
\overline{1}& 0 & 1  \\
  \cline{1-3}
 
\end{array}$$
and then:
   $$
\begin{array}{|c|c|c|c|c|c|c|c|c|c|c|c|c|c|c|c|c|c|}
 \cline{1-1}
\nu^1 \\
\cline{1-17}
0 & 1 &2&3&4&5&6&7&8&9& {10}& 11 & {\bf 12} & {\bf 13} & {\bf 14} & {\bf 15}& {\bf 16}\\
      \cline{1-17}   \\ 
\nu^2 \\
      \cline{1-9}
3 & 4& 5&6& 7&8 &9&10&11\\     
       \cline{1-9}
2&3 & 4& 5&6& 7&8  \\
  \cline{1-7} 
 1&2&3 & 4& 5&6\\
  \cline{1-6}   
0&1&2\\
  \cline{1-3}   
{ \overline{1}}& { 0} & { 1}  
\\ 
      \cline{1-3} 

  \cline{1-3}
 
\end{array}$$
At the end, the  concatenation (and reordering the parts if necessary)  of the two partitions we get must be  $m_e (\lambda)$. 
In our example, we obtain $((17),(9,7,6,3,3))$ so that $m_e (\lambda)=(17,9,7,6,3,3)$. 

\subsection{Example}
Let us keep our running example $\lambda=(10,8,7,5,4,4,3,2,1,1)$, $l=2$ and $e=4$ but this time, we take  $s=2$.  
The first two steps will give:
{\small
   $$
\begin{array}{|c|c|c|c|c|c|c|c|c|c|}
 \cline{1-1}
\ulambda[1]  \\
  \hline
  0&1  &2 & 3 &  4&5& 6&{ 7}&{  8}&{\bf 9}  \\
  \cline{1-10}
  \overline{1}&0 &1 & 2 &  3&{  4}&{  5}&{\bf 6} \\
  \cline{1-8}
\\ 
\ulambda[2]  \\
  \cline{1-7}
2&   3&4&5&6& 7&8 \\
      \cline{1-7}
 1 & 2 &  3&4&5 \\     
       \cline{1-5}
 0&1&2& 3 \\
  \cline{1-4} 
  \overline{1}&0&1 & 2\\
\cline{1-4}
    \overline{2}&\overline{1}&0\\
  \cline{1-3}
      \overline{3} & \overline{1}   \\
  \cline{1-2}
      \overline{4}\\
    \cline{1-1}   
    \overline{5} \\
        \cline{1-1} 
\end{array}
\to
\begin{array}{|c|c|c|c|c|c|c|c|c|}
 \cline{1-1}
\ulambda[1]  \\
  \hline
  0&1  &2 & 3 &  4&5& 6&{ 7}&{  \bf 8}\\
  \cline{1-9}
  \overline{1}&0 &1 & 2 &  3&{  4}&{  \bf 5} \\
  \cline{1-7}
\\ 
\ulambda[2]  \\
       \cline{1-4}
 4&5&6& 7 \\
  \cline{1-4} 
3&4&5 & 6\\
\cline{1-4}
2&3&4\\
  \cline{1-3}
   1 & 2   \\
  \cline{1-2}
0\\
    \cline{1-1}   
    \overline{1} \\
        \cline{1-1} 
\end{array}
\to
\begin{array}{|c|c|c|c|c|c|c|c|}
 \cline{1-1}
\ulambda[1]  \\
  \hline
  0&1  &2 & 3 &  4&5& 6&{ \bf 7}\\
  \cline{1-8}
  \overline{1}&0 &1 & 2 &  3&{  4}\\
  \cline{1-6}
\\ 
\ulambda[2]  \\
  \cline{1-2}
   5 & 6   \\
  \cline{1-2}
4\\
    \cline{1-1}   
 3 \\
        \cline{1-1} 
\end{array}
$$}
$$\to
\begin{array}{|c|c|c|c|c|c|c|c|}
 \cline{1-1}
\ulambda[1]  \\
  \hline
  0&1  &2 & 3 &  4&5& {\bf 6}\\
  \cline{1-7}
  \overline{1}&0 &1 & 2 &  3\\
  \cline{1-5}
\\ 
\ulambda[2]  \\
  \cline{1-1}
8\\
    \cline{1-1}   
 7 \\
        \cline{1-1} 
\end{array}$$
and thus, we obtain the bipartition $((6,6),(8,6,5,4,4,3,1,1,1)$ which is thus the bipartition 
$$\Psi^{(0,2)\to (0,2+4k)}_4 ((10,8,3,2,1,1),(7,5,4,4)).$$ Now, by induction, we know 
 $m_4 (6,6)=(6,6)$ and $m_4 (8,6,5,4,4,3,1,1,1)=(15,7,5,4,1,1)$.  We now perform Steps $3$ and $4$ for $((6,6),(15,7,5,3,1,1))$. 
   {\tiny$$
\begin{array}{|c|c|c|c|c|c|c|c|c|c|c|c|c|c|c|}
 \cline{1-1}
\nu^1  \\
\cline{1-6}
0 & 1 &2&3&4&5\\
\cline{1-6}
\overline{1} & 0 & 1 &2&3&4\\
\cline{1-6}
\\ 
\nu^2  \\
      \cline{1-15}
 10 & 11 &  12&13&14& 15&16&17&18 & 19 & 20 & 21 & 22& 23& 24\\     
       \cline{1-15}
 9&10&11& 12 &13&14&15  \\
  \cline{1-7} 
 8&9&10 & 11& 12\\
  \cline{1-5}   
7&8&9 & 10\\
      \cline{1-4}   
{\bf 6} \\
  \cline{1-1}
{\bf 5}  \\
        \cline{1-1} 
\end{array}
\to 
\begin{array}{|c|c|c|c|c|c|c|c|c|c|c|c|c|c|c|}
 \cline{1-1}
\nu^1  \\
\cline{1-7}
0 & 1 &2&3&4&5 & 6\\
\cline{1-7}
\overline{1} & 0 & 1 &2&3&4 & 5\\
\cline{1-7}
\\ 
\nu^2  \\
      \cline{1-15}
6& 7 &  8&9&10& 11&12&13&14 & 15 & 16 & 17 & 18& 19& 20\\     
       \cline{1-15}
 5&6&7& 8 &9&10&11  \\
  \cline{1-7} 
 4&5&6&{\bf  7}& {\bf 8}\\
  \cline{1-5}   
3&4&5 & {\bf 6}\\
      \cline{1-4}   
\end{array}
$$}
 {\tiny
$$
\begin{array}{|c|c|c|c|c|c|c|c|c|c|c|c|c|c|c|c|c|}
 \cline{1-1}
\nu^1  \\
\cline{1-9}
0 & 1 &2&3&4&5 & 6 & 7 & 8\\
\cline{1-9}
\overline{1} & 0 & 1 &2&3&4 & 5& 6\\
\cline{1-8}
\\ 
\nu^2  \\
      \cline{1-15}
2& 3 &  4&5&6& 7&8& {\bf 9} & {\bf 10} & {\bf 11}& {\bf 12} & {\bf 13} &  {\bf14 } &  {\bf15 } \\     
       \cline{1-15}
 1&2&3& 4 &5&6  \\
  \cline{1-6}  
\end{array}
\to  
\begin{array}{|c|c|c|c|c|c|c|c|c|c|c|c|c|c|c|c|c|}
 \cline{1-1}
\nu^1  \\
\hline
0 & 1 &2&3&4&5 & 6 & 7 & 8& 9&10 & 11 & 12 & 13 & 14& 15\\
\cline{1-17}
\overline{1} & 0 & 1 &2&3&4 & 5& 6 & 7 \\
\cline{1-9}
\\ 
\nu^2  \\
      \cline{1-1}
\end{array}
$$
}
We obtain the bipartition $((17,9),(7,6,3,3))$ and we conclude that  $m_e (\lambda)=(17,9,7,6,3,3)$ as in the last section. 
\section{Xu's algorithm}\label{alg}
In \cite{Xu1,Xu2}, Xu has given an algorithm for the computation of the Mullineux involution which is derived from the original Mullineux's algorithm. We here recall 
 this algorithm and then show that it can be seen as a particular case of ours. This will in particular give a new elementary proof for the fact that the algorithm computes the Mullineux involution.
 
 \subsection{The algorithm}
To describe Xu's algorithm, we will need some additional combinatorial definitions. Let $\lambda=(\lambda_1,\ldots,\lambda_r)$ be an $e$-regular partition with $\lambda_r\neq 0$.  The {\it  rim} of $\lambda$ is 
the subset of the Young diagram of $\ulambda$ consisting in the $(i,j)$ such that $(i+1,j+1)$ is not in $[\lambda]$.  The {\it $e$-rim} is now 
 the subset $\{ (a_1,b_1),\ldots,(a_m,b_m)\}$ of the rim of $\lambda$ which is obtained by following the rim of $\lambda$ from right to left and top to bottom,  and moving down one row every time the number of nodes we have  is dividible by $e$. 
 \begin{exa}
 Let $e=3$ and $\lambda=(7.4.2.2)$. The $e$-rim is given by the nodes marked by a star. 
 
   $$
\begin{array}{|c|c|c|c|c|c|c|c|}
  \hline
&  & &   & &\bigstar& \bigstar& \bigstar  \\
     \hline
&   & \bigstar&  \bigstar & \bigstar \\
   \cline{1-5}
&     & \bigstar\\
       \cline{1-3}
\ \  &   \bigstar    & \bigstar \\
          \cline{1-3}
\end{array}
 $$
 
 \end{exa}
 Assume that the cardinality of the $e$-rim of $\lambda$ is $m$. 
 The {\it truncated $e$-rim} of $\lambda$ is by definition the set of nodes $(i,j)$ in the $e$-rim of $\lambda$ such that $(i,j-1)$ is also in the $e$-rim of $\lambda$. 
  If $e$ does not divide $m$, we add also the node 
 $(r,x)$ in the $e$-rim of $\lambda$ such that $(r,x-1)$ is not in the $e$-rim.  We now define $\widetilde{\lambda}$  to be the partition  obtained by removing the truncated $e$-rim from $\lambda$. 
 It is easy to see that this partition is $e$-regular with rank strictly less than the rank of $\lambda$. 
  \begin{exa}
 Let $e=3$ and $\lambda=(8,5,3,3)$. The truncated $e$-rim is given by the nodes marked by a star. 
 
   $$
\begin{array}{|c|c|c|c|c|c|c|c|}
  \hline
&  &\ \  &   & &\ \ & \bigstar& \bigstar  \\
     \hline
&   & &  \bigstar & \bigstar \\
   \cline{1-5}
&     &\\
       \cline{1-3}
\ \  &   \ \   & \bigstar \\
          \cline{1-3}
\end{array}
 $$
 So the partition $\widetilde{\lambda}$  is $(6,3,3,2)$. 
 \end{exa}
Now we define a map 
$$X_e : \Phi_{(e,(0))} \to 
 \Phi_{(e,(0))} $$
 recursively as follows. We define $X_e (\emptyset)=\emptyset$ and if $\lambda\in  \Phi_{(e,(0)}(n)$ with $n\neq 0$ then $X_e (\lambda)$ is obtained by adding a column of length 
  $n-|\widetilde{\lambda} |$ to $X_e (\widetilde{\lambda} )$. 
  \begin{Th}[Xu]
  We have $X_e=m_e$. 
  \end{Th}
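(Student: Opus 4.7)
The plan is to prove $X_e = m_e$ by induction on $n = |\lambda|$, realizing Xu's procedure as the specific instance of the algorithm of Section \ref{stepa} corresponding to a particular multicharge. For the base case ($\lambda = \emptyset$, or more generally $\lambda$ an $e$-core), Remark \ref{core} identifies $m_e(\lambda)$ with the conjugate $\lambda'$. A direct verification shows that when $\lambda$ is an $e$-core the (truncated) $e$-rim coincides with the classical rim minus its leftmost box, so that iterating Xu's recursion strips off successive rim portions while adding matching columns, thereby building up $\lambda'$ column by column and recovering the conjugate.

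For the inductive step, let $\lambda$ be an $e$-regular partition of rank $n$ that is not an $e$-core, and assume the identity $X_e = m_e$ holds on all $e$-regular partitions of rank strictly less than $n$. I select a value $s \in \{1, \ldots, e-1\}$ adapted to $\lambda$ (the natural candidate is $s = e-1$, possibly adjusted according to $\lambda_1 \bmod e$) and apply the algorithm of Section \ref{stepa} with multicharge $(0, s)$. In Steps 1 and 2 we obtain $(\mu^1, \mu^2)$; the key claim is that this encodes the decomposition of $\lambda$ into $\widetilde{\lambda}$ together with its truncated $e$-rim, in the sense that the boxes transferred from $\ulambda[1]$ to $\ulambda[2]$ during the iterated crystal isomorphism are in bijection with the truncated $e$-rim, while the collected partition $\mu^2$ encodes $\widetilde{\lambda}$. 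In Step 3 we use the induction hypothesis to evaluate $m_e(\mu^1) = X_e(\mu^1)$ and $m_e(\mu^2) = X_e(\mu^2)$. Finally, in Step 4 the reverse crystal isomorphism together with the inversion of $\theta$ attaches to $X_e(\widetilde{\lambda})$ a column of length equal to the size of the truncated $e$-rim, which is precisely Xu's output $X_e(\lambda)$.

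The principal obstacle is verifying the combinatorial identity claimed in Step 2: that the iterative procedure of Section \ref{12}, in which each row of $\ulambda[1]$ donates a ``tail'' of high-content boxes to a specific row of $\ulambda[2]$, matches the traversal of the $e$-rim of $\lambda$ from right to left and top to bottom, with the ``skip one row'' rule governing the $e$-rim corresponding to the ``$+e$'' shift of contents that occurs between successive applications of $\Psi^{(0,s+je) \to (0,s+(j+1)e)}_e$. Concretely, one must show that the sequence of rows of $\lambda$ visited by the $e$-rim path coincides with the sequence of rows that receive boxes during the successive passes of the algorithm, and that the number of boxes transferred at each step agrees with the length of the corresponding piece of the truncated $e$-rim. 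Once this combinatorial correspondence is established via a careful content-tracking argument in the symbol framework of subsection \ref{crysi}, the theorem follows from the correctness of the algorithm proved in Section \ref{stepa} together with the induction hypothesis, giving a direct and elementary proof that Xu's algorithm computes the Mullineux involution.
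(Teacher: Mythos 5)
Your overall strategy is the one the paper actually follows: induct on the rank, realize Xu's recursion as the Section~\ref{stepa} algorithm for the multicharge $(0,e-1)$, identify Steps 1--2 with removal of the truncated $e$-rim, and identify Steps 3--4 with the adjunction of a column. But as written the proposal leaves both of the substantive combinatorial facts unproved, and only recognizes one of them as requiring proof.

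The identification of Steps 1--2 with the pair $((r),\widetilde{\lambda})$ is the paper's Proposition~\ref{12p}, and it is exactly what you defer as the ``principal obstacle'': the paper proves it by an induction inside the symbol/content computation, splitting according to whether the first row of $\ulambda[1]$ can donate a truncated row to $\ulambda[2]$ or not. Deferring this is deferring essentially all of the work. More seriously, your Step 4 claim --- that the reverse crystal isomorphism followed by $\theta^{-1}$ ``attaches a column of length $r$ to $X_e(\widetilde{\lambda})$'' --- is a second, independent combinatorial statement for which you give no mechanism, and it would not follow from your Step 2 analysis. The paper does not compute the inverse isomorphism $\Psi_e^{(0,1+ke)\to(0,1)}$ directly; instead it proves the forward statement (Proposition~\ref{13}) that for any $e$-regular $\mu$ with first column of length $t$ one has $\Psi_e^{(0,1)\to(0,1+ke)}\circ\theta_{e,(0,1)}(\mu)=(m_e(t),\mu-1)$, and then applies this to $\mu=m_e(\lambda)$, using the already established correctness and injectivity of the general algorithm to conclude that $t=r$ and $m_e(\lambda)-1=m_e(\widetilde{\lambda})$. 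Without this device (or a genuine direct analysis of the inverse map), your inductive step does not close.

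Finally, your base case is both unnecessary and incorrect as stated. The truncated $e$-rim of an $e$-core is in general not the rim minus its leftmost box: for $e=4$ and $\lambda=(2,1)$ (a $4$-core) the truncated $4$-rim is $\{(1,2),(2,1)\}$, while the rim minus its leftmost box is $\{(1,1),(1,2)\}$. In fact the induction bottoms out not at $e$-cores but at the partitions for which the recursion cannot proceed (single rows of length less than $e$, where $\mu^2=\emptyset$ by Proposition~\ref{diff0}), and there both $m_e$ and $X_e$ are immediately seen to give the conjugate single column.
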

  
  We here give a new proof of this Theorem using the crystal isomorphisms. 
  
\begin{exa}
We keep the above example. We can compute $X_3 (6,3,3,2)=(8,2,2,1,1)$, now we have exactly  $5$ nodes in the truncated $3$-rim of $\lambda$ so
 $X_e (7,4,2,2)$ is obtained by adding a column of length $5$ to $(8,2,2,1,1)$ and we get $X_3 (8,5,3,3)=(9,3,3,2,2)$. 

\end{exa}

\subsection{Relation with crystal isomorphisms}
We  will see in this subsection that Xu's algorithm is equivalent to ours in  the case where we choose $s=e-1$. For $\lambda$ an  $e$-regular  partition, we denote by $\widetilde{\lambda}$ 
 the partition obtained by removing the truncated $p$-rim as in Xu's algorithm.  We denote by $r$ the number of boxes in the truncated $p$-rim. 
  \begin{Prop}\label{12p}
  
  We have $\Psi_e^{(0,e-1)\to (0,e-1+ke)} \circ \theta_{e,(0,e-1)} (\lambda)=(r,\widetilde{\lambda})$ ($k>>0)$
  \end{Prop}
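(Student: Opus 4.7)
The plan is to prove Proposition \ref{12p} by carefully unfolding the iterative algorithm of subsection \ref{12} in the case $s=e-1$. Since $e-s=1$, one starts with $\ulambda[1]=(\lambda_1)$, a single row, and $\ulambda[2]=(\lambda_2,\ldots,\lambda_r)$ carrying top-left content $e-1$. Throughout the iteration $\ulambda[1]$ remains a single row whose length only shrinks, and after sufficiently many rounds (so that the multicharge becomes very dominant) the first component of $\Psi_e^{(0,e-1)\to(0,e-1+ke)}\circ\theta_{e,(0,e-1)}(\lambda)$ is what remains of $\ulambda[1]$ and the second is the accumulated partition $\mu$.

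The central claim to verify is that the successive rounds of the algorithm are in bijection with the maximal horizontal chunks of the $e$-rim of $\lambda$, read from top to bottom. Concretely, in the round handling the $k$-th chunk, the rule that selects the row of $\ulambda[2]$ with greatest rightmost content strictly less than the current rightmost content of $\ulambda[1]$ picks out the row hosting the top-right node of that chunk; the boxes transferred from $\ulambda[1]$ are then precisely those corresponding to the truncated portion (every node except the leftmost) of the chunk; and the resulting modified row of $\ulambda[2]$, when later collected into $\mu$, matches the corresponding row of $\widetilde{\lambda}$. Summing over all rounds, the total number of boxes moved out of $\ulambda[1]$ equals $\lambda_1-r$, so what remains is the single row $(r)$, and $\mu$ becomes $\widetilde{\lambda}$.

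The main obstacle will be careful bookkeeping around two boundary phenomena in the definition of the truncated $e$-rim. First, singleton horizontal chunks of the $e$-rim contribute no boxes to the truncated $e$-rim, and one must show that the algorithm's clause ``if this is not possible we switch'' cleanly handles the corresponding rounds by producing no transfer (possibly inserting a new row of $\ulambda[2]$ of matching length). Second, the additional node added to the truncated $e$-rim when $e\nmid m$ (with $m$ the size of the full $e$-rim) must be matched against the behaviour of the algorithm in its final round on the bottommost chunk. An alternative and possibly shorter route is a direct symbol computation: write down the $(0,e-1+ke)$-symbol of $((r),\widetilde{\lambda})$ in closed form in terms of the beta numbers of $\lambda$ grouped by residue modulo $e$, and check that iterating the procedure of Example \ref{exacomp} on the symbol of $\theta_{e,(0,e-1)}(\lambda)$ produces the same symbol.
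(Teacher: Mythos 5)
Your overall strategy (unfold the iterative algorithm of \S 5.1 for $s=e-1$ and match it, round by round, against the removal of the truncated $e$-rim, then conclude by induction) is the same as the paper's, and you correctly flag the two boundary phenomena. But your central claim is quantitatively wrong, and in fact internally inconsistent: if in each round the boxes transferred out of $\ulambda[1]$ were ``precisely those corresponding to the truncated portion of the chunk'', then the total number of boxes leaving $\ulambda[1]$ would be $r$ (the size of the truncated $e$-rim), so the remaining row would be $(\lambda_1-r)$ --- the opposite of what the proposition requires. Concretely, take $e=3$ and $\lambda=(8,5,3,3)$ (the paper's own example, with $r=5$ and $\widetilde{\lambda}=(6,3,3,2)$): the successive images under the one-step isomorphisms are $((8),(5,3,3))\to((7),(6,3,3))\to((7),(6,3,3))\to((5),(6,3,3,2))$, so the three nontrivial rounds transfer $1$, $0$ and $2$ boxes respectively, while the truncated portions of the four row-chunks of the $e$-rim have sizes $2,2,0,1$. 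Neither the transfer counts nor the number of rounds match the bijection you assert.

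The identification that actually makes the induction work carries an index shift: the row of $\ulambda[2]$ that receives boxes when the algorithm treats row $j$ is built on top of $\lambda_{j+1}$, not carved out of $\lambda_j$, and the correct statement is that this row ends up of length $\lambda_j-t_j=\widetilde{\lambda}_j$ (where $t_j$ is the number of truncated $e$-rim nodes in row $j$), so that the number of boxes transferred for that row is $\lambda_j-\lambda_{j+1}-t_j$; this may be $0$, and may require creating a new row at the bottom of $\ulambda[2]$. The remainder $r$ in $\ulambda[1]$ then comes from the telescoping sum $\lambda_1-\sum_j(\lambda_j-\lambda_{j+1}-t_j)=\sum_j t_j=r$, not from ``$\lambda_1$ minus the truncated portions''. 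This is exactly the content of the assertion in the paper's proof that the modified initial segment $(\lambda_2,\dots,\lambda_{k+1})$ of $\ulambda[2]$ equals $(\lambda_1,\dots,\lambda_k)$ with the first truncated $e$-rim removed. Without this corrected identification your argument establishes the wrong count, so as written the proposal has a genuine gap rather than a merely unfinished verification; the alternative symbol-theoretic route you mention is not developed enough to repair it.
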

\begin{proof}
We denote $\ulambda[2]=(\lambda_2,\ldots,\lambda_e)$. 
We begin with the two first steps of our algorithm which  are described in \S \ref{12}. Assume first that one cannot add any ``truncated row'' of $\lambda_1$  in $\ulambda[2]$. 
 This means that  there exists $k>0$  such that $\lambda_{k+1}-k+e-1=\lambda_1-1$ and we have the following partitions:

   $$
\begin{array}{|c|c|c|c|c|c|c|c|c|c|c|}
 \cline{1-1}
\ulambda[1]  \\
  \hline
  0&1  &2 &  \ldots  &\ldots  &\ldots  &\ldots  &\ldots  & \textcolor{red}{x} & \textcolor{red}\ldots  & \textcolor{red}{ \lambda_1-1}\\
   \hline
\\ 
\ulambda[2]  \\
        \cline{1-9} 
  e-1&e   & \ldots & \ldots &   \ldots &   \ldots &   \ldots &    \textcolor{blue}{\ldots} &   \textcolor{blue}{ x+e-1}\\
         \cline{1-9} 
  e-2&e-1   & \ldots &   \ldots &  \ldots &   \ldots &     \textcolor{blue}{\ldots} &     \textcolor{blue}{\lambda_{3}+e-3}\\     
             \cline{1-8}   
               \vdots & \vdots & \vdots & \vdots &  \vdots &   \vdots   &   \textcolor{blue}{ \vdots} \\
           \cline{1-7}   
             e-s+1& \ldots & \ldots & \ldots &  \ldots&   \textcolor{blue}{\ldots}   &  \textcolor{blue}{\lambda_{s}-s+e}\\
                        \cline{1-7}   
                        \ldots  & \ldots & \ldots & {\ldots} & \vdots  \\
                           \cline{1-5}   
e-k  & \ldots & \ldots &{\ldots} & \textcolor{red}{\lambda_1-1} \\
                      \cline{1-5}   
e-k-1  & \ldots & \ldots & {\ldots} & \textcolor{red}{\lambda_1-2} \\
         \cline{1-5}   
\end{array}$$
(with $x=\lambda_2-1$)

Then the partition $(\lambda_2,\ldots,\lambda_{k+1})$ corresponds to  the partition $(\lambda_1,\ldots,\lambda_k)$ with  the very first truncated $e$-rim removed.
 If $\lambda_{k+1}=0$ then we are done and $\lambda_1$ is the number of nodes in the truncated $p$-rim minus $1$. In this case the number of elements in the associated $e$-rim is not $e$. 
Otherwise we get $e$ boxes in the associated rim and we must go to the second step of our algorithm.

Assume that one can add a truncated row of length $r$. Assume that the  row is added in    the part $\lambda_{k+1}$. Then the partition $(\lambda_2,\ldots,\lambda_{k+1})$ corresponds to  the partition $(\lambda_1,\ldots,\lambda_k)$ with  a truncated $e$-rim removed. 

   $$
\begin{array}{|c|c|c|c|c|c|c|c|c|c|c|}
 \cline{1-1}
\ulambda[1]  \\
  \hline
  0&1  &2 &  \ldots  &\ldots  &\ldots  &\ldots  &\ldots  & \textcolor{red}{x} & \textcolor{red}\ldots  & \textcolor{red}{ \lambda_1-1}\\
   \hline
\\ 
\ulambda[2]  \\
        \cline{1-9} 
  e-1&e   & \ldots & \ldots &   \ldots &   \ldots &   \ldots &    \textcolor{blue}{\ldots} &   \textcolor{blue}{ x+e-1}\\
         \cline{1-9} 
  e-2&e-1   & \ldots &   \ldots &  \ldots &   \ldots &     \textcolor{blue}{\ldots} &     \textcolor{blue}{\lambda_{3}+e-3}\\     
             \cline{1-8}   
               \vdots & \vdots & \vdots & \vdots &  \vdots &   \vdots   &   \textcolor{blue}{ \vdots} \\
           \cline{1-7}   
             e-k+1& \ldots & \ldots & \ldots &   \textcolor{blue}{\lambda_1} &   \textcolor{blue}{\ldots}   &  \textcolor{blue}{\lambda_{k}-k+e}\\
                        \cline{1-7}   
e-k  & \ldots & \textcolor{red}{x} & \textcolor{red}{\ldots} & \textcolor{red}{\lambda_1-1} \\
         \cline{1-5}   
\end{array}$$
If $\lambda_{k+1}$ is non zero. Note that the length of the truncated $p$-rim is $e-k$.  
 By induction, the first $e-k$ nodes of the partition $\ulambda[1]$ will not moved in our algorithm
We can thus just argue by induction by replacing $\ulambda[1]$ with the partition
$\ulambda[1]- (e-k)$ to find $\ulambda[2]$  and take into account that we must add $e-k$ (the length of the truncated $p$-rim) to the 
 partition we obtain at the end of our algorithm. Note that the content of the leftmost node in  our first partition will be now $e-k$ and 
  the contet of the leftmost node of the second partition $(e-k)-e-1$ so the induction can be done.

\end{proof}
On the other hand, we now have the following result:
  \begin{Prop}\label{13}
  
  We have $\Psi_e^{(0,1)\to (0,1+ke)} \circ \theta_{e,(0,1)} (\lambda)=(m_e (t) , \lambda-1)$ where $t$ is the length of the first column of $\lambda$ ($k>>0$)
  \end{Prop}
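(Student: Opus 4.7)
My plan is to execute the algorithm of \S\ref{12} on $(\lambda^1,\lambda^2) := \theta_{e,(0,1)}(\lambda)$ round by round, using strong induction on $t$, the length of the first column of $\lambda$. I will separately establish (i) that the accumulated partition $\mu$ equals $\lambda-1$, and (ii) that the terminal $\ulambda[1]$ equals $m_e((t))$.

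For (i), the key analysis is in round 1. For each $i \in \{1,\dots,e-1\}$, the rightmost content of row $i$ of $\ulambda[1]$ is $c_i = \lambda_i - i$, while row $i$ of $\ulambda[2]$ (with charge $s_2=1$) starts with rightmost content $\lambda_{e-1+i} + 1 - i$. Using the $e$-regularity of $\lambda$ and the partition inequality $\lambda_i \geq \lambda_{e-1+i}$, one checks that this smaller content is precisely the $c'$ picked by the algorithm, so the algorithm transfers boxes from row $i$ of $\ulambda[1]$ to row $i$ of $\ulambda[2]$ until the latter has rightmost content $c_i$, hence new length exactly $\lambda_i - 1$. Thus $\mu$ absorbs $(\lambda_1-1,\dots,\lambda_{e-1}-1)$ in round 1, and $\ulambda[1]$ is updated to $(1+\lambda_e,\dots,1+\lambda_{2e-2})$.

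The residual state (with $\ulambda[2]$ shifted by $+e$) is identifiable with the initial state of the algorithm applied to a reduced partition $\lambda'$ whose first column has length $t' = t - (e-1)$. By the inductive hypothesis, this computation yields $(m_e((t')), \lambda' - 1)$. Tracing the correspondence back, the final $\ulambda[1]$ is obtained from $m_e((t'))$ by adding a column of length $e-1$; by the recursive definition of Xu's algorithm this equals $m_e((t))$, since the first iteration of Xu on $(t)$ removes a truncated $e$-rim of size $e-1$ (when $t \geq e$). The accumulated $\mu$ is $(\lambda_1-1, \ldots, \lambda_{e-1} - 1)$ concatenated with $\lambda' - 1$, which is exactly $\lambda - 1$.

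The main obstacle is precisely matching the round 1 residual with the initial state of the reduced problem, in particular tracking the charge shift (the effective charge of $\ulambda[2]$ shifts by $+1$ per round, not by $+e$ as a naive reading might suggest). Edge cases---when $\lambda$ has fewer than $2e-2$ rows so that new rows must be created in $\ulambda[2]$, and the base cases $t \leq e-1$ where $\ulambda[2]$ is initially empty---require direct verification. In the base case, the first round creates rows $(\lambda_1-1,\dots,\lambda_t-1)$ in $\ulambda[2]$ while reducing $\ulambda[1]$ to $(1^t)$, which coincides with $m_e((t))$ for $t \leq e-1$ by direct computation of Xu's algorithm.
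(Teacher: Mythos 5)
Your proposal is correct and follows essentially the same route as the paper: both proofs trace the combined process of \S 5.1 round by round, establish that each round transfers the top $e-1$ rows (contributing $\lambda_i-1$ to the second component and replacing row $i$ of $\ulambda[1]$ by the next block of parts, each lengthened by one), and observe that the effective charge of the second component increases by $1$ per round so that the residual state is a shifted copy of the reduced problem; the paper simply unrolls this iteration explicitly instead of packaging it as an induction on $t$, and identifies the terminal first component by its closed form $((k+1)^s,k^{e-1-s})$ rather than by your column-adding recursion. One point you should repair: you justify $m_e((t))=m_e((t-(e-1)))$ plus a column of length $e-1$ ``by the recursive definition of Xu's algorithm,'' but the equality $X_e=m_e$ is precisely what \S 6 deduces from this proposition, so as written this is circular; instead justify the single-row recursion independently, e.g.\ from the fact that $m_e((t))$ is the conjugate-regularization of a column (equivalently the closed form just quoted), which is the same unproved-but-standard fact the paper itself invokes at the end of its argument.
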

\begin{proof}
We use the algorithm described in subsection \ref{34}, using these notations, we are in the following configuration:
   $$
\begin{array}{|c|c|c|c|c|c|c|}
 \cline{1-1}
\ulambda[1]  \\
  \hline
  0&1  &2 & 3 &  \ldots  &\ldots  &  \lambda_1-1\\
   \hline
  \overline{1}&0 &1 & 2 & \ldots  & \lambda_2-2 \\
  \cline{1-6}   
    \vdots & \vdots & \vdots & \vdots &  \vdots  \\
        \cline{1-5} 
    \overline{e-2} & \ldots & \ldots & \lambda_{e-1}-(e-1)\\
        \cline{1-4} 
\\ 
\ulambda[2]  \\
        \cline{1-5} 
  1&2   & \ldots & \ldots &   \lambda_{e}\\
  \cline{1-5}
  0&\ldots   & \ldots & \lambda_{e+1}-1   \\
  \cline{1-4}   
  \vdots & \vdots &  \vdots &  \vdots  \\
    \cline{1-4}  
    \overline{e-3}&\ldots   & \ldots   &  \lambda_{2e-2}-(e-2)   \\
        \cline{1-4}  
            \overline{e-2}&\ldots   &  \lambda_{2e-1}-(e-1)   \\
                   \cline{1-3}  
\vdots & \vdots &\vdots  
\end{array}$$
The first step of our algorithm thus gives:

   $$
\begin{array}{|c|c|c|c|c|c|c|}
 \cline{1-1}
\ulambda[1]  \\
  \hline
  0&1  &2 & 3 &  \ldots  &\ldots  &  \lambda_e\\
   \hline
  \overline{1}&0 &1 & 2 & \ldots  & \lambda_{e+1}-1 \\
  \cline{1-6}   
    \vdots & \vdots & \vdots & \vdots &  \vdots  \\
        \cline{1-5} 
    \overline{e-2} & \ldots & \ldots & \lambda_{2e-2}-(e-2)\\
        \cline{1-4} 
\\ 
\ulambda[2]  \\
        \cline{1-5} 
  1&2   & \ldots & \ldots &   \lambda_{1}-1\\
  \cline{1-5}
  0&\ldots   & \ldots & \lambda_{2}-2   \\
  \cline{1-4}   
  \vdots & \vdots &  \vdots  \\
    \cline{1-3}  
    \overline{e-3}&\ldots   &  \lambda_{e-1}-(e-1)   \\
        \cline{1-3}  
            \overline{e-2}&\ldots   &  \lambda_{2e-1}-(e-1)   \\
        \cline{1-3}  
\vdots & \vdots &\vdots  
\end{array}$$
and  now, we have to perform the algorithm for the following configuration of partitions:
   $$
\begin{array}{|c|c|c|c|c|c|c|}
 \cline{1-1}
\ulambda[1]  \\
  \hline
  0&1  &2 & \ldots &  \ldots  &\ldots  &  \lambda_e\\
   \hline
  \overline{1}&0 &1 & \ldots & \ldots  & \lambda_{e+1}-1 \\
  \cline{1-6}   
    \vdots & \vdots & \vdots & \vdots &  \vdots  \\
        \cline{1-5} 
    \overline{e-2} & \ldots & \ldots & \lambda_{2e-2}-(e-2)\\
        \cline{1-4} 
\\ 
\ulambda[2]'  \\
        \cline{1-5} 
  2& & \ldots & \ldots &   \lambda_{2e-1}+1\\
  \cline{1-5}
  1&\ldots   & \ldots & \lambda_{2e}   \\
  \cline{1-4}   
  \vdots & \vdots &  \vdots  \\
    \cline{1-3}  
    \overline{e-2}&\ldots   &  \lambda_{3e-3}-e+3   \\
        \cline{1-3}  
\vdots & \vdots &\vdots  
\end{array}$$
which thus leads to 
   $$
\begin{array}{|c|c|c|c|c|c|c|}
 \cline{1-1}
\ulambda[1]  \\
  \hline
  0&1  &2 & 3 &  \ldots  &\ldots  &  \lambda_{2e-1}+1\\
   \hline
  \overline{1}&0 &1 & 2 & \ldots  & \lambda_{2e} \\
  \cline{1-6}   
    \vdots & \vdots & \vdots & \vdots &  \vdots  \\
        \cline{1-5} 
    \overline{e-2} & \ldots & \ldots & \lambda_{3e-3}-e+3\\
        \cline{1-4} 
\\ 
\ulambda[2]'  \\
        \cline{1-5} 
  2& & \ldots & \ldots &   \lambda_{e}\\
  \cline{1-5}
  1&\ldots   & \ldots & \lambda_{e+1}-1   \\
  \cline{1-4}   
  \vdots & \vdots &  \vdots  \\
    \cline{1-3}  
    \overline{e-2}&\ldots   &  \lambda_{2e-2}-(e-2)   \\
        \cline{1-3}  
     \overline{e-3}&\ldots   &  \lambda_{3e-2}-(e-2)   \\       
         \cline{1-3}       
\vdots & \vdots &\vdots  
\end{array}$$

Now, we come to the last step, assume that  $s$ is maximal such that $\lambda_{ke-k+s}\neq 0$ (so that $ke-k+s$ is the length of the first column of $\lambda$). 
    Then, we are in the following configuration where 
 we have an addable $k+1$-node in the second partition.


   $$
\begin{array}{|c|c|c|c|c|c|}
 \cline{1-1}
\ulambda[1]  \\
  \hline
  0&1   & \ldots &  \ldots  &\ldots  &  \lambda_{ke-k+1}+k-1\\
   \hline
  \overline{1}&0  & \ldots  & \ldots  & \lambda_{ke-k+2}+k-2 \\
  \cline{1-5}   
    \vdots & \vdots& \vdots &  \vdots  \\
        \cline{1-4} 
    \overline{s-1} & \ldots & \ldots & \lambda_{ke-k+s}+k-s\\
            \cline{1-4} 
                \overline{s} & \ldots & k-s-1\\
        \cline{1-3} 
    \vdots & \vdots& \vdots  \\
        \cline{1-3} 
    \overline{e-2} & \ldots & k-(e-1)\\
        \cline{1-3} 
\\ 
\ulambda[2]'  \\ 
        \cline{1-1}  
\end{array}$$
and we obtain  for $\ulambda[1]$:

   $$
\begin{array}{|c|c|c|c|c|}
  \hline
  0 & \ldots &  \ldots  &k  \\
   \hline
  \overline{1}&0  & \ldots  & k-1  \\
  \cline{1-5}   
    \vdots & \vdots& \vdots &  \vdots  \\
        \cline{1-4} 
    \overline{s-1} & \ldots & k-s &k+1-s\\
            \cline{1-4} 
                \overline{s} & \ldots & k-s-1\\
        \cline{1-3} 
    \vdots & \vdots& \vdots  \\
        \cline{1-3} 
    \overline{e-2} & \ldots & k-(e-1)\\
        \cline{1-3} 
\end{array}$$

The first partition is the Mullineux image of the partition $((k+1)(e-1)-(-s-2+e+1))=(ke-k+s)$. The second partition we get in the algorithm  is $\lambda-1$ which is exactly what we wanted.

\end{proof}

Let us now explain in which way our two algorithms are equivalent in the case where we choose ${\bf s}=(0,e-1)$. Let $\lambda$ be an $e$-regular partition and recall the $4$ steps of our algorithm at the beginning of \S \ref{stepa}. 
\begin{enumerate}
\item By Proposition \ref{12p}, after the two first steps of our algorithm, we obtain $(r,\widetilde{\lambda})$ where $r$ is the number of boxes in the truncated rim. 
\item By induction, we know $m_e (\widetilde{\lambda})$  and  the third step of our algorithm consists in the computation of  
 the image of 
$(m_e (r),m_e (\widetilde{\lambda}))$ with respect to $\Psi_e^{ (0,1+ke)\to (0,1)}$ (for $k>>0$). 
\item By Proposition \ref{13} that we apply to $\mu=m_e (\lambda)$, we have
$\Psi_e^{(0,1)\to (0,1+ke)} \circ \theta_{e,(0,1)} (\mu)=(m_e (t) , \mu-1)$ (where $t$ is the length of the first column of $\mu$) 
 so $m_e (\lambda)$ is the partition obtained by adding a row of length $r$ to $m_e (\widetilde{\lambda})$ as in Xu's algorithm. 

\end{enumerate}

The above result thus shows that Xu's algorithm indeed computes the Mullineux involution. 

\begin{Rem}
In \cite{BK}, Brundan and Kujawa gave another interpretation of the Xu's algorithm using the representation theory of the supergroup 
$GL(n|n)$. It would be interesting to understand the connection wof this work with ours. 

\end{Rem}

\noindent {\bf Address}\\

\noindent \textsc{Nicolas Jacon}, Universit\'e de Reims Champagne-Ardenne, UFR Sciences exactes et naturelles, Laboratoire de Math\'ematiques UMR CNRS 9008
Moulin de la Housse BP 1039, 51100 Reims, FRANCE\\  \emph{nicolas.jacon@univ-reims.fr}

\end{document}